\documentclass{amsart}
\usepackage[nobysame]{amsrefs}
\usepackage{amssymb}
\usepackage[usesnames,svgnames]{xcolor}
\usepackage[sc]{mathpazo}
  \linespread{1.05}         
\usepackage{tikz}
\usepackage{enumerate}
\usepackage{subfigure}
\usepackage[colorlinks=true,
	urlcolor=MidnightBlue,
	citecolor=DarkGreen]{hyperref}
\newtheorem{theorem}{Theorem}[section]
\newtheorem{proposition}[theorem]{Proposition}
\newtheorem{conjecture}[theorem]{Conjecture}
\newtheorem{lemma}[theorem]{Lemma}
\newtheorem{corollary}[theorem]{Corollary}
\newtheorem{definition}[theorem]{Definition}
\theoremstyle{remark}
\newtheorem{remark}[theorem]{Remark}
\newtheorem{example}[theorem]{Example}
\newcommand{\alert}[1]{{\color{DarkGreen}\emph{#1}}}
\newcommand{\ie}{\text{i.e.}\;}
\newcommand{\PP}{\mathcal{P}}

\newcommand{\bruh}{\mathcal{B}_{\gamma}}
\author{Henri M{\"u}hle}
\address{LIAFA, Universit{\'e} Paris Diderot, Case 7014, F-75205 Paris Cedex 13, France}
\thanks{This work was funded by the FWF Research Grant No. Z130-N13, and by a Public Grant overseen by the French National Research Agency (ANR) as part of the ``Investissements d'Avenir'' Program (Reference: ANR-10-LABX-0098).}
\email{henri.muehle@liafa.univ-paris-diderot.fr}
\title{SB-Labelings, Distributivity, and Bruhat Order on Sortable Elements}
\keywords{SB-Labeling, M{\"o}bius function, Crosscut complex, Distributive lattice, Join-distributive lattice, Antimatroids, Bruhat order, Sortable Elements, Coxeter groups}
\subjclass[2010]{20F55 (primary), and 06D75, 06A07 (secondary)}

\begin{document}

\allowdisplaybreaks

\begin{abstract}
	In this article, we investigate the set of $\gamma$-sortable elements, associated with a Coxeter group $W$ and a Coxeter element $\gamma\in W$, under Bruhat order, and we denote this poset by $\mathcal{B}_{\gamma}$. We show that this poset belongs to the class of SB-lattices recently introduced by Hersh and M{\'e}sz{\'a}ros, by proving a more general statement, namely that all join-distributive lattices are SB-lattices. The observation that $\mathcal{B}_{\gamma}$ is join-distributive is due to Armstrong. Subsequently, we investigate for which finite Coxeter groups $W$ and which Coxeter elements $\gamma\in W$ the lattice $\mathcal{B}_{\gamma}$ is in fact distributive. It turns out that this is the case for the ``coincidental'' Coxeter groups, namely the groups $A_{n},B_{n},H_{3}$ and $I_{2}(k)$. We conclude this article with a conjectural characteriziation of the Coxeter elements $\gamma$ of said groups for which $\mathcal{B}_{\gamma}$ is distributive in terms of forbidden orientations of the Coxeter diagram. 
\end{abstract}

\maketitle

\section{Introduction}
	\label{sec:introduction}
Recently, Hersh and M{\'e}sz{\'a}ros introduced a new class of lattices, so-called SB-lattices~\cite{hersh14sb}. They showed that these lattices admit a certain edge-labeling, which implies that the order complex of every open interval of this lattice is homotopy equivalent to a sphere or a ball (hence the name). Equivalently, the M{\"o}bius function of such a lattice takes values only in $\{-1,0,1\}$. In the same paper they showed that every distributive lattice admits an SB-labeling, and they showed that the same is true for the weak order on a Coxeter group and for the Tamari lattice. We extend their results by showing that another class of lattices, so-called join-semidistributive lattice, belong to the class of SB-lattices as well.

Subsequently, we investigate a particular family of join-semidistributive lattices, namely the set of $\gamma$-sortable elements of a Coxeter groups equipped with Bruhat order. We denote this poset by $\bruh$. The $\gamma$-sortable elements were defined by Reading and Speyer, see \cites{reading07sortable,reading11sortable}. To summarize, the first main result of this paper is the following.

\begin{theorem}\label{thm:main_sb}
	Every join-distributive lattice is an SB-lattice. In particular, the Bruhat order on $\gamma$-sortable elements is an SB-lattice for every Coxeter group $W$ and every Coxeter element $\gamma\in W$. 
\end{theorem}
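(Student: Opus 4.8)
The plan is to exhibit an explicit edge-labeling of an arbitrary finite join-distributive lattice $L$ and to verify directly that it satisfies the SB-labeling axioms of Hersh and M\'esz\'aros; once such a labeling exists, $L$ is an SB-lattice by definition and every open interval is forced to be homotopy equivalent to a sphere or a ball. Recall that those axioms only constrain the intervals $[u,v]$ in which $v$ is a join $z_1\vee\cdots\vee z_k$ of elements $z_1,\ldots,z_k$ each covering $u$: in every such interval the labels $\lambda(u,z_i)$ must be pairwise distinct, and every saturated chain from a fixed $z_i$ up to $v$ must carry exactly the label set $\{\lambda(u,z_j):j\neq i\}$. So the entire problem reduces to understanding these very special intervals.

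The structural input I would use is the standard characterization of join-distributivity: for each $u\in L$ the interval $[u,\kappa(u)]$, where $\kappa(u)=\bigvee\{z:u\lessdot z\}$ is the join of all upper covers of $u$, is a Boolean lattice. It follows that for any subset $z_1,\ldots,z_k$ of the upper covers of $u$ the join $v=z_1\vee\cdots\vee z_k$ is a join of atoms of the Boolean lattice $[u,\kappa(u)]$, whence $[u,v]$ is itself Boolean of rank $k$. Thus every interval mentioned by the SB-axioms is Boolean. To define the labeling I would pass to the antimatroid realization of $L$ (Edelman--Jamison): each element is a feasible set, a cover $u\lessdot z$ adjoins a single ground-set element $e$, and I set $\lambda(u,z)=e$. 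Intrinsically this is the canonical join-irreducible distinguishing $z$ from $u$, and the two descriptions coincide.

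With this labeling the verification is immediate. Inside a Boolean interval $[u,v]$ of rank $k$ the atoms $z_1,\ldots,z_k$ receive the $k$ distinct generators $e_1,\ldots,e_k$, which is the distinctness axiom. A saturated chain from $z_i$ up to $v$ adjoins precisely the elements of $\{e_1,\ldots,e_k\}\setminus\{e_i\}$, one per step, so its label set is exactly $\{\lambda(u,z_j):j\neq i\}$, which is the chain axiom; moreover the generator read off a bottom edge $u\lessdot z_j$ is literally the same symbol that reappears on the chains higher up, which is exactly the coherence the axioms require. Having produced an SB-labeling, $L$ is an SB-lattice, and the second assertion follows at once by invoking Armstrong's observation that $\bruh$ is join-distributive for every finite $W$ and every Coxeter element $\gamma$.

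The step I expect to need the most care---and which I regard as the main obstacle---is the global well-definedness of the labeling: the ground-set element distinguishing $z$ from $u$ must be an intrinsic invariant of the abstract cover relation, and it must agree with the label that the same cover carries when it is viewed as an edge inside a larger Boolean interval higher up in $L$. Once one checks that the antimatroid (equivalently, canonical-join) labeling is order-theoretically intrinsic and coherent in this sense, the reduction to Boolean intervals renders both SB-axioms entirely formal.
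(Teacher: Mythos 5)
Your proposal is correct and is essentially the paper's own proof: both label each cover relation by the ground-set element adjoined in the antimatroid realization (Edelman's theorem) and reduce the SB-axioms to the fact that the interval below a join of covers of $u$ is Boolean --- the paper gets the rank-two (diamond) case directly from meet-semidistributivity plus upper semimodularity, while you invoke the equivalent Boolean-interval characterization via $[u,\kappa(u)]$, and your ``well-definedness'' worry is a non-issue since the label of $u\lessdot z$ is just the unique element of the set difference of the corresponding feasible sets. The only caveat is your restriction to finite lattices and finite $W$: the theorem concerns every Coxeter group, where $\bruh$ may be infinite but finitary, and since the SB-conditions are local (each interval $[u,z_{1}\vee z_{2}]$ is finite) your argument extends verbatim to that setting.
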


Join-distributivity is a generalization of distributivity, and while working with the poset $\bruh$ we observed that for some Coxeter groups and for some Coxeter elements the lattice $\bruh$ is in fact distributive. This led us to the question whether we can characterize the (finite) Coxeter groups $W$ and the Coxeter elements $\gamma\in W$ for which $\bruh$ is distributive. We approach this problem by looking for forbidden orientations of the Coxeter diagram of $W$, and we prove the following result.

\begin{theorem}\label{thm:main_distributive}
	Let $W$ be a finite Coxeter group. There exists a Coxeter element $\gamma\in W$ such that $\bruh$ is distributive if and only if $W$ is of type $A_{n},B_{n},H_{3}$ or $I_{2}(k)$. 
\end{theorem}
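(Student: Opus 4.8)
The plan is to combine the structural input that $\bruh$ is join-distributive (Armstrong, as recalled in the introduction) with the classification of finite Coxeter groups by their diagrams. First I would record the reduction that does all the bookkeeping: since $\bruh$ is join-distributive it is in particular join-semidistributive, hence contains no copy of the diamond $M_3$ as a sublattice; by Birkhoff's criterion $\bruh$ is therefore distributive if and only if it contains no pentagon $N_5$ as a sublattice. Thus the entire theorem becomes a statement about the presence or absence of pentagons, which I can transport along standard parabolic subgroups. (A preliminary product decomposition reduces everything to irreducible $W$: for a reducible group $\bruh$ factors as a direct product over the irreducible components, and a product of lattices is distributive precisely when each factor is.)

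The transport lemma I would prove next. For $J\subseteq S$ with parabolic $W_J$ and induced Coxeter element $\gamma_J$ (restrict a reduced word of $\gamma$ to the letters in $J$), the $\gamma$-sortable elements lying in $W_J$ are exactly the $\gamma_J$-sortable elements of $W_J$, by Reading's compatibility of sortable elements with parabolic subgroups. Since the principal Bruhat ideal below the longest element $w_{0,J}$ of $W_J$ is precisely $W_J$, and $w_{0,J}$ is itself $\gamma$-sortable, the interval $[e,w_{0,J}]$ in $\bruh$ coincides as a poset with $\mathcal{B}_{\gamma_J}$; being an interval in a lattice, it is a sublattice. Consequently a pentagon in $\mathcal{B}_{\gamma_J}$ is a pentagon in $\bruh$, so non-distributivity is inherited from parabolic sub-diagrams.

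With these in hand the two directions split cleanly. For the ``only if'' direction I would observe that every non-coincidental irreducible finite type contains one of $D_4$, $F_4$, $H_4$ as a sub-diagram: the branching types $D_n$ $(n\ge 4)$ and $E_6,E_7,E_8$ all contain the star $D_4$, while the only remaining non-coincidental patterns are an interior label-$4$ edge, realized minimally by $F_4$, and a label-$5$ edge on a path of length at least three, realized minimally by $H_4$; neither embeds in a larger non-coincidental type without already exhibiting one of the three. It then suffices to verify, for each of the rank-$4$ groups $D_4,F_4,H_4$ and for \emph{every} Coxeter element $\gamma$, that $\bruh$ contains a pentagon. This is a finite, computer-assisted check, the number of Coxeter elements being bounded by the number of acyclic orientations of the diagram (further reduced by diagram symmetry), and the transport lemma upgrades it to all non-coincidental types. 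For the ``if'' direction I would exhibit a concrete Coxeter element per coincidental type and certify the absence of pentagons: the rank-bounded cases $I_2(k)$ and $H_3$ are direct verifications, and for the dihedral groups one checks explicitly that $\bruh$ is a Boolean square with a chain attached, which is distributive.

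The main obstacle is the sufficiency for the two infinite families $A_n$ and $B_n$, where a uniform-in-$n$ argument is needed. Here I would fix the linear Coxeter element (for $B_n$ with the label-$4$ edge at the end of the path) and induct on the rank: the chosen orientation restricts to the corresponding good orientation of the rank-$(n-1)$ parabolic, so by induction the interval $[e,w_{0,J}]$ is distributive, and the crux is to show that the join-irreducibles contributed by the last generator form a chain that attaches to this interval without creating a pentagon. The cleanest target is to identify $\bruh$ with the lattice of order ideals of the type-$W$ root poset; the cardinalities already agree, since both the number of $\gamma$-sortable elements and the number of order ideals of $\Phi^{+}$ equal the Coxeter--Catalan number $\mathrm{Cat}(W)$ (via nonnesting partitions), so the remaining content is to match the orders. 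Controlling this attachment, or establishing the ideal-lattice isomorphism, uniformly across all $n$ is the delicate step; everything else is either a bounded finite check or a formal consequence of the transport lemma.
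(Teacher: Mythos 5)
Your overall architecture parallels the paper's: a transport lemma along standard parabolic subgroups (the paper's Lemma~4.1 and Remark~4.3, by which $\mathcal{B}_{\gamma'}$ sits as an interval, hence a sublattice, of $\bruh$), a reduction of the negative direction to the rank-four groups $D_{4}$, $F_{4}$, $H_{4}$ (the paper's Corollary~4.4), and explicit good Coxeter elements for the coincidental types (the paper's Proposition~4.5). Your Birkhoff-style reformulation is a harmless repackaging --- note only that the exclusion of $M_{3}$ should be attributed to meet-semidistributivity, which is what the paper's notion of join-distributivity actually contains, rather than to join-semidistributivity --- and replacing the paper's hand-crafted violating triples (Proposition~4.2) by a finite computer check over all Coxeter elements of $D_{4}$, $F_{4}$, $H_{4}$ is legitimate: these lattices have $50$, $105$ and $280$ elements respectively, and Coxeter elements biject with acyclic orientations of the diagram, so the check is bounded and well-defined (the paper itself resorts to a computer check for $H_{3}$).

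The genuine gap is the sufficiency for the two infinite families, and your own text concedes it: for $A_{n}$ and $B_{n}$ with the linear Coxeter element you offer two strategies --- an inductive argument in which the join-irreducibles contributed by the last generator "attach as a chain without creating a pentagon", or an identification of $\bruh$ with the lattice of order ideals of the root poset --- but you execute neither. The Coxeter--Catalan cardinality match does nothing to pin down the order structure, and the inductive step you describe is precisely the assertion to be proved, not a reduction of it; there is no reason offered why the new join-irreducibles form a chain, nor why their insertion cannot create an $N_{5}$ involving old and new elements. The paper closes exactly this hole by importing known results: the bijection of \cite{bandlow01area} identifies $\mathcal{B}_{\gamma}$ for the linear orientation of $A_{n}$ with the lattice of Dyck paths under dominance order, and the bijection of \cite{stump13more} does the same for $B_{n}$ with type-$B$ Dyck paths; both dominance lattices are known to be distributive (equivalently, they are the ideal lattices of the respective root posets). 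Without proving such an isomorphism, or citing these results, your argument establishes only the "only if" direction together with the bounded-rank cases $H_{3}$ and $I_{2}(k)$ of the "if" direction, and so does not prove the theorem.
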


The finite Coxeter groups appearing in Theorem~\ref{thm:main_distributive} are sometimes called the ``coincidental types'', since these groups enjoy a list of properties that distinguishes them from the other finite (complex) reflection groups, see \cite{fomin05generalized}*{Theorems~8.5~and~10.2}, \cite{miller15foulkes}*{Theorem~14}, \cite{reading08chains}, and \cite{williams13cataland}*{Remark~3.1.26}. Theorem~\ref{thm:main_distributive} adds another property to this list. We conclude this article with a conjectural characterization of the Coxeter elements $\gamma\in W$ for which $\bruh$ is distributive.

\smallskip

This article is organized as follows: in Section~\ref{sec:preliminaries} we define SB-lattices, Coxeter groups and sortable elements. We subsequently prove Theorem~\ref{thm:main_sb} in Section~\ref{sec:join_distributive lattices}, and define the poset $\bruh$. We conclude this paper by proving Theorem~\ref{thm:main_distributive} in Section~\ref{sec:distributivity}.

\section{Preliminaries}
	\label{sec:preliminaries}
In this section, we recall the basic concepts needed in this article. For more background on SB-labelings, we refer to \cite{hersh14sb}. For any undefined notation and additional information on Coxeter groups and sortable elements, we refer to \cite{bjorner05combinatorics} and \cite{reading11sortable}, respectively.

\subsection{SB-Labelings}
	\label{sec:sb_labelings}
Let $\PP=(P,\leq)$ be a (possibly infinite) poset. An element $p\in P$ is \alert{covered by} another element $q\in P$ (denoted by $p\lessdot q$) if $p<q$ and there exists no $z\in P$ with $p<z<q$. Accordingly, $q$ \alert{covers} $p$ and the elements $p$ and $q$ form a \alert{cover relation} or an \alert{edge} in $\PP$. The set $\mathcal{E}(\PP)=\bigl\{(p,q)\mid p\lessdot q\bigr\}$ is the \alert{Hasse diagram} of $\PP$. 

For $p\leq q$ we call a set of the form $[p,q]=\{z\in P\mid p\leq z\leq q\}$ a \alert{(closed) interval} of $\PP$. A \alert{saturated chain} in an interval $[p,q]$ is a sequence $(p,z_{1},z_{2},\ldots,z_{k-1},q)$, where $z_{i}\in P$ for $i\in\{1,2,\ldots,k-1\}$ and $p\lessdot z_{1}\lessdot z_{2}\lessdot\cdots\lessdot z_{k-1}\lessdot q$. 

A poset $\PP$ is a \alert{lattice} if any two elements in $\PP$ have a least upper bound (a \alert{join}) and a greatest lower bound (a \alert{meet}), denoted by $\vee$ and $\wedge$, respectively. A lattice is \alert{locally finite} if every interval is finite.

An \alert{edge-labeling} of $\PP$ is a map $\lambda:\mathcal{E}(\PP)\to\Lambda$, for some set of labels $\Lambda$. An \alert{SB-labeling} of a lattice $\PP$ is an edge-labeling $\lambda$ of $\PP$ that satisfies the following properties for every $p,p_{1},p_{2}\in P$ with $p\lessdot p_{1},p_{2}$:
\begin{enumerate}[(i)]
	\item $\lambda(p,p_{1})\neq\lambda(p,p_{2})$;
	\item each saturated chain in the interval $[p,p_{1}\vee p_{2}]$ uses both labels $\lambda(p,p_{1})$ and $\lambda(p,p_{2})$ a positive number of times;\quad and
	\item none of the saturated chains in the interval $[p,p_{1}\vee p_{2}]$ uses any other label besides $\lambda(p,p_{1})$ and $\lambda(p,p_{2})$.
\end{enumerate}

A locally finite lattice with a least element that admits an SB-labeling is called an \alert{SB-lattice}.

\begin{remark}
	In fact, the original definition of an SB-labeling given in \cite{hersh14sb}*{Definition~3.2} was phrased a bit differently, but it was shown in \cite{hersh14sb}*{Theorem~3.5} that the above definition is equivalent to the original definition. 
\end{remark}

SB-lattices enjoy the following nice property.

\begin{theorem}[\cite{hersh14sb}*{Theorem~3.8}]\label{thm:sb_mobius}
	The M{\"o}bius function of an SB-lattice takes values only in $\{-1,0,1\}$. 
\end{theorem}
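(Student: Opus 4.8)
The plan is to prove the statement by translating the Möbius function into a topological invariant and then using the SB-labeling to pin down the homotopy type of the relevant simplicial complexes. First I would recall Philip Hall's theorem: for any interval $[x,y]$ of $\PP$, the Möbius value $\mu(x,y)$ equals the reduced Euler characteristic $\tilde{\chi}\bigl(\Delta(x,y)\bigr)$ of the order complex of the open interval $(x,y)=\{z\in P\mid x<z<y\}$. Since the reduced Euler characteristic of a $d$-sphere is $(-1)^{d}\in\{-1,+1\}$ and that of a contractible space (in particular a ball) is $0$, it suffices to show that $\Delta(x,y)$ is homotopy equivalent to a sphere or is contractible for every interval $[x,y]$; this is precisely the ``sphere or ball'' property behind the name. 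The base case $x\lessdot y$ gives the empty open interval, whose order complex is the $(-1)$-sphere with $\tilde{\chi}=-1=\mu(x,y)$, so from now on I fix an interval of rank at least two and relabel its endpoints as $\hat{0}$ and $\hat{1}$.

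The second step is to replace the order complex by a smaller model via the Crosscut Theorem. Let $A=\{a\in P\mid \hat{0}\lessdot a\}$ be the set of atoms of $[\hat{0},\hat{1}]$. The crosscut complex $\Gamma$ has vertex set $A$ and faces all subsets $S\subseteq A$ with $\bigvee S\neq\hat{1}$; the Crosscut Theorem asserts that $\Delta(\hat{0},\hat{1})$ is homotopy equivalent to $\Gamma$. If $\bigvee A\neq\hat{1}$, then $A$ itself is a face, so $\Gamma$ is the full simplex on $A$, hence contractible, and $\mu(\hat{0},\hat{1})=0$. The interesting case is $\bigvee A=\hat{1}$, where I must show that $\Gamma$ is homotopy equivalent to a sphere.

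The heart of the argument is to use the SB-labeling to control, for each $S\subseteq A$, whether $\bigvee S=\hat{1}$. By condition (i) the atoms carry pairwise distinct labels, say $\ell_{a}:=\lambda(\hat{0},a)$, and conditions (ii) and (iii) say that every saturated chain in $[\hat{0}, a\vee a']$ uses exactly the two labels $\ell_{a}$ and $\ell_{a'}$. I would package this into a \emph{label-set} invariant: assign to each $z\in[\hat{0},\hat{1}]$ the set of labels occurring on the saturated chains from $\hat{0}$ to $z$, and prove by induction on the rank of $z$ that this set is well defined and that passing from $\bigvee S$ to $\bigvee(S\cup\{a\})$ enlarges it by exactly $\ell_{a}$. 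Granting this, $\bigvee S=\hat{1}$ becomes equivalent to a purely combinatorial ``label-covering'' condition on $S$, from which the faces of $\Gamma$ can be read off; in the cleanest situation the only set of atoms joining to $\hat{1}$ is $A$ itself, so $\Gamma$ is the boundary of the simplex on $A$, a sphere $S^{|A|-2}$ with $\mu(\hat{0},\hat{1})=(-1)^{|A|}$.

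I expect the main obstacle to lie exactly in establishing the label-set invariant. Conditions (i)--(iii) are stated only for two elements covering a \emph{common} element, whereas an arbitrary atom $a$ need not cover $\bigvee S$; hence the pairwise information does not propagate to joins of three or more atoms without work. Overcoming this should require an induction that re-applies the SB-conditions at each covering inside $[\hat{0},\hat{1}]$, together with a discrete Morse matching (or a nerve argument) on $\Gamma$ to conclude, in the general case, that $\Gamma$ collapses either to a point or to a single sphere. Once the homotopy type of $\Gamma$ is shown to be a sphere or a point in every interval, the Euler-characteristic computation of the first step yields $\mu\in\{-1,0,1\}$ and completes the proof.
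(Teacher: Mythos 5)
First, a point of comparison: the paper does not prove this statement at all --- it is imported verbatim, with citation, from Hersh and M{\'e}sz{\'a}ros \cite{hersh14sb}*{Theorem~3.8}. So your attempt has to be measured against \emph{their} proof, and your outline follows its architecture exactly: Philip Hall's theorem turns the claim into one about reduced Euler characteristics, the crosscut theorem applied to the atom set $A$ of a finite interval $[\hat{0},\hat{1}]$ replaces the order complex by the complex of subsets $S\subseteq A$ with $\bigvee S\neq\hat{1}$, and the SB-conditions are then used to identify this complex as a full simplex (when $\bigvee A\neq\hat{1}$) or the boundary of a simplex (when $\bigvee A=\hat{1}$). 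This is precisely the content of \cite{hersh14sb}*{Theorem~3.7}, the ``sphere or ball'' statement that gives SB-lattices their name.

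However, the step you defer --- your ``label-set invariant'' --- is not a technical obstacle to be smoothed over later; it \emph{is} the theorem, and your proposal contains no argument for it. What must be shown is: for every subset $S$ of atoms of $[\hat{0},\hat{1}]$, every saturated chain of $[\hat{0},\bigvee S]$ uses only labels from $\bigl\{\lambda(\hat{0},a)\mid a\in S\bigr\}$ and uses each of them at least once; this requires an induction of exactly the sort you gesture at, and carrying it out is the work done in the proof of \cite{hersh14sb}*{Theorem~3.7}. Granting that lemma, the argument closes with no further topology: if $\bigvee A=\hat{1}$, then no proper subset $S\subsetneq A$ can have $\bigvee S=\hat{1}$, since an atom $a\in A\setminus S$ would lie below $\bigvee S$, so some saturated chain of $[\hat{0},\bigvee S]$ would begin $\hat{0}\lessdot a$ and use the label $\lambda(\hat{0},a)$, which by the lemma must coincide with $\lambda(\hat{0},a')$ for some $a'\in S$, contradicting condition (i). Hence the crosscut complex is exactly the boundary of the simplex on $A$, and your ``cleanest situation'' is the only situation. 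Your closing appeal to ``a discrete Morse matching (or a nerve argument)'' is therefore a symptom of the gap rather than a cure for it: without the label-set lemma you have no control over which subsets of atoms join to $\hat{1}$, and with it you need no collapsing argument at all.
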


\subsection{Coxeter Groups}
  \label{sec:coxeter_groups}
A \alert{Coxeter group} is a group $W$ admitting a presentation
\begin{displaymath}
	W = \bigl\langle s_{1},s_{2},\ldots,s_{n}\mid (s_{i}s_{j})^{m_{i,j}}=\varepsilon,\;\text{for}\;i,j\in\{1,2,\ldots,n\}\bigr\rangle,
\end{displaymath}	
where $\varepsilon\in W$ is the identity and the numbers $m_{i,j}$ are either positive integers or the formal symbol $\infty$ for all $i,j\in\{1,2,\ldots,n\}$ such that $m_{i,j}\geq 2$ if $i\neq j$, and $m_{i,i}=1$. (We use the convention that $\infty$ is formally larger than every integer.) The elements in $S=\{s_{1},s_{2},\ldots,s_{n}\}$ are the \alert{Coxeter generators} of $W$, and $n$ is the \alert{rank} of $W$. A subgroup of $W$ that is generated by a subset $J\subseteq S$ is a Coxeter group in its own right and is called a \alert{standard parabolic subgroup} of $W$. A Coxeter group is called \alert{irreducible} if it is not isomorphic to a direct product of Coxeter groups of smaller rank. The finite irreducible Coxeter groups were completely classified by Coxeter in \cite{coxeter35complete}. This classification is best visualized using so-called Coxeter diagrams. The \alert{Coxeter diagram} of $W$, denoted by $\Gamma(W)$, is a labeled graph whose vertices are the Coxeter generators of $W$, and two generators $s_{i}$ and $s_{j}$ are connected by an edge if and only if $m_{i,j}\geq 3$. In addition, the edge between $s_{i}$ and $s_{j}$ is labeled by $m_{i,j}$ if $m_{i,j}\geq 4$. It is not hard to see that $W$ is irreducible if and only if its Coxeter diagram is connected. Figure~\ref{fig:coxeter_diagrams} shows the Coxeter diagrams of the finite irreducible Coxeter groups. 

\begin{figure}
	\centering
	\begin{tabular}{c|l}
		Name & Coxeter diagram\\
		\hline
		$A_{n},\quad n\geq 1$ & 
		\raisebox{-.2cm}{\begin{tikzpicture}\small
			\draw(4,.2) node{};
			\draw(4,-.2) node{};
			\draw(0,0) node(s1){$s_{1}$};
			\draw(1.5,0) node(s2){$s_{2}$};
		\draw(3,0) node(s3){$s_{3}$};
			\draw(3.67,0) node(sd){$\cdots$};
			\draw(4.5,0) node(sn1){$s_{n-1}$};
			\draw(6,0) node(sn){$s_{n}$};
			\draw[thick](s1) -- (s2) -- (s3) -- (sd) -- (sn1) -- (sn);
		\end{tikzpicture}}\\
		$B_{n},\quad n\geq 2$ & 
		\raisebox{-.2cm}{\begin{tikzpicture}\small
			\draw(4,.2) node{};
			\draw(4,-.2) node{};
			\draw(0,0) node(s1){$s_{1}$};
			\draw(1.5,0) node(s2){$s_{2}$};
			\draw(3,0) node(s3){$s_{3}$};
			\draw(3.67,0) node(sd){$\cdots$};
			\draw(4.5,0) node(sn1){$s_{n-1}$};
			\draw(6,0) node(sn){$s_{n}$};
			\draw(5.33,.15) node{\tiny $4$};
			\draw[thick](s1) -- (s2) -- (s3) -- (sd) -- (sn1) -- (sn);
		\end{tikzpicture}}\\
		$D_{n},\quad n\geq 4$ & 
		\raisebox{-.2cm}{\begin{tikzpicture}\small
			\draw(4,-.2) node{};
			\draw(0,0) node(s1){$s_{1}$};
			\draw(1.5,0) node(s2){$s_{2}$};
			\draw(3,0) node(s3){$s_{3}$};
			\draw(3.67,0) node(sd){$\cdots$};
			\draw(4.5,0) node(sn2){$s_{n-2}$};
			\draw(4.5,.75) node(sn1){$s_{n-1}$};
			\draw(6,0) node(sn){$s_{n}$};
			\draw[thick](s1) -- (s2) -- (s3) -- (sd) -- (sn2) -- (sn1);
			\draw[thick](sn2) -- (sn);
		\end{tikzpicture}}\\
		$E_{6}$ & 
		\raisebox{-.2cm}{\begin{tikzpicture}\small
			\draw(4,-.2) node{};
			\draw(0,0) node(s1){$s_{1}$};
			\draw(1.5,0) node(s2){$s_{2}$};
			\draw(3,0) node(s3){$s_{3}$};
			\draw(3,.75) node(s6){$s_{6}$};
			\draw(4.5,0) node(s4){$s_{4}$};
			\draw(6,0) node(s5){$s_{5}$};
			\draw[thick](s1) -- (s2) -- (s3) -- (s4) -- (s5);
			\draw[thick](s3) -- (s6);
		\end{tikzpicture}}\\
		$E_{7}$ & 
		\raisebox{-.2cm}{\begin{tikzpicture}\small
			\draw(4,-.2) node{};
			\draw(0,0) node(s1){$s_{1}$};
			\draw(1.5,0) node(s2){$s_{2}$};
			\draw(3,0) node(s3){$s_{3}$};
			\draw(3,.75) node(s7){$s_{7}$};
			\draw(4.5,0) node(s4){$s_{4}$};
			\draw(6,0) node(s5){$s_{5}$};
			\draw(7.5,0) node(s6){$s_{6}$};
			\draw[thick](s1) -- (s2) -- (s3) -- (s4) -- (s5) -- (s6);
			\draw[thick](s3) -- (s7);
		\end{tikzpicture}}\\
		$E_{8}$ & 
		\raisebox{-.2cm}{\begin{tikzpicture}\small
			\draw(4,-.2) node{};
			\draw(0,0) node(s1){$s_{1}$};
			\draw(1.5,0) node(s2){$s_{2}$};
			\draw(3,0) node(s3){$s_{3}$};
			\draw(3,.75) node(s8){$s_{8}$};
			\draw(4.5,0) node(s4){$s_{4}$};
			\draw(6,0) node(s5){$s_{5}$};
			\draw(7.5,0) node(s6){$s_{6}$};
			\draw(9,0) node(s7){$s_{7}$};
			\draw[thick](s1) -- (s2) -- (s3) -- (s4) -- (s5) -- (s6) -- (s7);
			\draw[thick](s3) -- (s8);
		\end{tikzpicture}}\\
		$F_{4}$ & 
		\raisebox{-.2cm}{\begin{tikzpicture}\small
			\draw(4,.2) node{};
			\draw(4,-.2) node{};
			\draw(0,0) node(s1){$s_{1}$};
			\draw(1.5,0) node(s2){$s_{2}$};
			\draw(3,0) node(s3){$s_{3}$};
			\draw(4.5,0) node(s4){$s_{4}$};
			\draw(2.25,.15) node{\tiny $4$};
			\draw[thick](s1) -- (s2) -- (s3) -- (s4);
		\end{tikzpicture}}\\
		$H_{3}$ & 
		\raisebox{-.2cm}{\begin{tikzpicture}\small
			\draw(4,.2) node{};
			\draw(4,-.2) node{};
			\draw(0,0) node(s1){$s_{1}$};
			\draw(1.5,0) node(s2){$s_{2}$};
			\draw(3,0) node(s3){$s_{3}$};
			\draw(2.25,.15) node{\tiny $5$};
			\draw[thick](s1) -- (s2) -- (s3);
		\end{tikzpicture}}\\
		$H_{4}$ & 
		\raisebox{-.2cm}{\begin{tikzpicture}\small
			\draw(4,.2) node{};
			\draw(4,-.2) node{};
			\draw(0,0) node(s1){$s_{1}$};
			\draw(1.5,0) node(s2){$s_{2}$};
			\draw(3,0) node(s3){$s_{3}$};
			\draw(4.5,0) node(s4){$s_{4}$};
			\draw(3.75,.15) node{\tiny $5$};
			\draw[thick](s1) -- (s2) -- (s3) -- (s4);
		\end{tikzpicture}}\\
		$I_{2}(k),\quad k\geq 5$ & 
		\raisebox{-.2cm}{\begin{tikzpicture}\small
			\draw(4,.2) node{};
			\draw(4,-.2) node{};
			\draw(0,0) node(s1){$s_{1}$};
			\draw(1.5,0) node(s2){$s_{2}$};
			\draw(.75,.15) node{\tiny $k$};
			\draw[thick](s1) -- (s2);
		\end{tikzpicture}}\\
	\end{tabular}
	\caption{The Coxeter diagrams of the finite irreducible Coxeter groups.}
	\label{fig:coxeter_diagrams}
\end{figure}

Since $S$ is a generating set of $W$, we can write every $w\in W$ as a product of Coxeter generators. The least number of generators needed to form $w$, is called the \alert{Coxeter length} of $w$, and will be written as $\ell_{S}(w)$. We say that a word $w=s_{i_{1}}s_{i_{2}}\cdots s_{i_{k}}$ is \alert{reduced} if $\ell_{S}(w)=k$.

\subsection{Sortable Elements}
  \label{sec:sortable_elements}
Let $W$ be a Coxeter group of rank $n$. An element $\gamma\in W$ is called a \alert{Coxeter element} of $W$ if $\gamma=s_{\pi(1)}s_{\pi(2)}\cdots s_{\pi(n)}$ for some permutation $\pi$ of $\{1,2,\ldots,n\}$. Without loss of generality, we can restrict our attention to the Coxeter element $\gamma=s_{1}s_{2}\cdots s_{n}$. Consider the half-infinite word
\begin{displaymath}
	\gamma^{\infty}=s_{1}s_{2}\cdots s_{n}\vert s_{1}s_{2}\cdots s_{n}\vert s_{1}\cdots.
\end{displaymath}
The vertical bars have no influence on the structure of the word, but shall serve for a better readability. Clearly, for every $w\in W$, every reduced word for $w$ can be written as a subword of $\gamma^{\infty}$. We call the lexicographically first subword of $\gamma^{\infty}$ that is a reduced word for $w$, the \alert{$\gamma$-sorting word} of $w$, and we denote it by $\gamma(w)$. We can write
\begin{displaymath}
	\gamma(w) = s_{1}^{\delta_{1,1}}s_{2}^{\delta_{1,2}}\cdots s_{n}^{\delta_{1,n}}\vert s_{1}^{\delta_{2,1}}s_{2}^{\delta_{2,2}}\cdots s_{n}^{\delta_{2,n}}\vert\cdots\vert s_{1}^{\delta_{l,1}}s_{2}^{\delta_{l,2}}\cdots s_{n}^{\delta_{l,n}},
\end{displaymath}
for $l\in\mathbb{N}$ and $\delta_{i,j}\in\{0,1\}$ for $i\in\{1,2,\ldots,l\}$ and $j\in\{1,2,\ldots,n\}$. The \alert{$i$-th block} of $w$ is the set $b_{i}=\{s_{j}\mid\delta_{i,j}=1\}$. We say that $w$ is \alert{$\gamma$-sortable} if $b_{1}\supseteq b_{2}\supseteq\cdots\supseteq b_{l}$, and we write $C_{\gamma}$ for the set of $\gamma$-sortable elements of $W$. Further, define the set of filled positions of $w$ by
\begin{displaymath}
	\alpha_{\gamma}(w)=\bigl\{(i-1)n+j\mid \delta_{i,j}=1\bigr\}. 
\end{displaymath}
We notice that $\alpha_{\gamma}$ depends on the choice of reduced word for $\gamma$, while $C_{\gamma}$ does not.

\begin{remark}\label{rem:sortable_origin}
	The concept of $\gamma$-sortability was introduced by Reading in \cite{reading07sortable} as a generalization of stack-sortability, and was used to define the family of Cambrian lattices associated with a Coxeter group \cites{reading06cambrian,reading11sortable}. The number of $\gamma$-sortable elements of a finite Coxeter group $W$ is the $W$-Catalan number, defined in \cite{bessis03dual}*{Section~5.2}, and Reading used the $\gamma$-sortable elements to provide a bridge between the noncrossing partitions of $W$ and the clusters of $W$, see \cite{reading07clusters}.
	
	The concept of $\gamma$-sortability has been further extended by Armstrong in \cite{armstrong09sorting}, where he defined $\omega$-sortability for an arbitrary, not necessarily reduced word $\omega$ in the Coxeter generators of $W$, and if $\omega=\gamma^{\infty}$, then one obtains precisely the $\gamma$-sortable elements.
\end{remark}

\section{Join-Distributive Lattices}
	\label{sec:join_distributive lattices}
In this section we prove the first part of Theorem~\ref{thm:main_sb}, namely that every join-distributive lattice admits an SB-labeling, see Theorem~\ref{thm:join_distributive_sb} below. Let us first recall the necessary definitions. A lattice $\PP=(P,\leq)$ is \alert{meet-semidistributive} if for every three elements $p,q,r\in P$ with $p\wedge q=p\wedge r$ we have $p\wedge q=p\wedge(q\vee r)$. Moreover, $\PP$ is \alert{upper semimodular} if for every two elements $p,q\in P$ with $p\wedge q\lessdot p,q$, we have $p,q\lessdot p\vee q$. Then, a lattice is \alert{join-distributive} if it is both meet-semidistributive and upper semimodular. See \cite{edelman80meet} for more information on join-distributive lattices. 

Recall further that an \alert{antimatroid} is a pair $(M,\mathcal{F})$, where $M$ is a set and $\mathcal{F}\subseteq\wp(M)$ is a family of subsets of $M$ that satisfies the following properties: 
\begin{enumerate}[(i)]
	\item $\emptyset\in\mathcal{F}$; \quad and
	\item if $X,Y\in\mathcal{F}$ with $Y\not\subseteq X$, then there exists some $x\in X\setminus Y$ such that $X\cup\{x\}\in\mathcal{F}$. 
\end{enumerate}
The elements of $\mathcal{F}$ are called the \alert{feasible sets} of $(M,\mathcal{F})$. We have the following result. 

\begin{theorem}[\cite{edelman80meet}*{Theorem~3.3}]\label{thm:join_distributive_antimatroid}
	A lattice $\PP$ is join-distributive if and only if there exists an antimatroid $(M,\mathcal{F})$ such that $\PP\cong(\mathcal{F},\subseteq)$. 
\end{theorem}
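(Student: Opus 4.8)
The plan is to prove both implications, treating $\PP=(P,\leq)$ as a lattice of finite length with least element $\hat{0}$, since this is the setting in which covers, and hence the two defining properties of join-distributivity, are available.

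For the direction $(\mathcal{F},\subseteq)\Rightarrow$ join-distributive, I would set $\PP=(\mathcal{F},\subseteq)$ and first extract from axiom~(ii) that $\mathcal{F}$ is closed under union: given $X,Y\in\mathcal{F}$, repeated application of the exchange property enlarges $X$ one element at a time by elements of $Y\setminus X$ until $X\cup Y$ is reached, so $X\cup Y\in\mathcal{F}$. Hence $\PP$ is a lattice with $X\vee Y=X\cup Y$, while $X\wedge Y$ is the (unique) largest feasible set contained in $X\cap Y$. The same exchange step shows that every cover $X\lessdot X'$ is the addition of a single element, $X'=X\cup\{x\}$. Upper semimodularity is then immediate: if $X\wedge Y\lessdot X,Y$, then $X=(X\wedge Y)\cup\{a\}$ and $Y=(X\wedge Y)\cup\{b\}$ with $a\neq b$, so $X\vee Y=(X\wedge Y)\cup\{a,b\}$ covers both. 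Meet-semidistributivity I would verify directly from~(ii); this is the most computational point of this direction, but it is a short manipulation once the join is known to be the union.

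For the converse, the engine is a local lemma that I would establish first: for each $z\in P$, writing $z^{*}=\bigvee\{w\mid z\lessdot w\}$, the interval $[z,z^{*}]$ is a Boolean lattice whose atoms are exactly the upper covers of $z$. Upper semimodularity forces joins of upper covers to again cover the covers, so the interval is cube-like, while meet-semidistributivity prevents two distinct subsets of upper covers from sharing a join, so the cube does not collapse; together they yield a Boolean interval of rank equal to the number of upper covers of $z$. Using this, I would attach a label to every cover: meet-semidistributivity guarantees that each $x$ has a unique minimal (canonical) join representation, so each cover $x\lessdot y$ determines a unique join-irreducible $j$ with $y=x\vee j$, and I let $M$ be the set of these labels. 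Defining $\psi(x)$ to be the set of labels occurring along a maximal chain from $\hat{0}$ to $x$, the Boolean-interval lemma shows $\psi(x)$ is independent of the chosen chain and uses each label at most once. I would then check that $\psi$ is an order embedding with $\psi(\hat{0})=\emptyset$ and $\psi(x\vee y)=\psi(x)\cup\psi(y)$, so that $\mathcal{F}=\psi(P)$ is closed under union and $\PP\cong(\mathcal{F},\subseteq)$; axiom~(i) is $\psi(\hat{0})=\emptyset$, and axiom~(ii) follows because whenever $\psi(y)\not\subseteq\psi(x)$ the local lemma supplies an upper cover of $x$ whose label lies in $\psi(y)\setminus\psi(x)$, i.e. a feasible one-element extension.

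The main obstacle is this converse direction, and within it the two consistency claims: that the label of a cover is genuinely well defined, which is exactly where meet-semidistributivity enters through uniqueness of canonical join representations, and that $\psi(x)$ is independent of the maximal chain used to compute it, which is where the Boolean structure of the intervals $[z,z^{*}]$ is indispensable. Once the local Boolean lemma is in hand, closure under union and the exchange axiom both fall out of the same local picture, so I would invest the bulk of the effort in proving that lemma.
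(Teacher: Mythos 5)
The paper itself does not prove this statement---it is quoted from Edelman---so your attempt can only be measured against the standard proof. Your first direction is essentially correct: union-closure follows by iterating the exchange axiom (note that you have tacitly corrected the paper's typo, reading ``$x\in Y\setminus X$'' in axiom~(ii)), every cover is a single-element addition, the covering form of upper semimodularity then follows, and the meet-semidistributivity check you postpone is indeed short: if $W=X\wedge Y=X\wedge Z$ were properly contained in $V=X\wedge(Y\vee Z)$, the exchange axiom applied to $W\subsetneq V$ gives $v\in V\setminus W$ with $W\cup\{v\}\in\mathcal{F}$; since $v\in X\cap(Y\cup Z)$, say $v\in X\cap Y$, then $W\cup\{v\}$ is a feasible subset of $X\cap Y$ strictly larger than $X\wedge Y$, a contradiction. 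Likewise, your local lemma in the converse (each interval $[z,z^{*}]$ is Boolean) is a true, classical characterization, and your division of labour between the two axioms in proving it is the right one.

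The genuine gap is the labelling in the converse. You assert that meet-semidistributivity yields unique minimal (canonical) join representations, hence for each cover $x\lessdot y$ a unique join-irreducible $j$ with $y=x\vee j$. This is backwards: canonical join representations are equivalent to \emph{join}-semidistributivity (meet-semidistributivity gives canonical \emph{meet} representations), and join-distributive lattices are in general not join-semidistributive. Concretely, $\mathcal{F}=\bigl\{\emptyset,\{1\},\{2\},\{1,2\},\{1,3\},\{2,3\},\{1,2,3\}\bigr\}$ is an antimatroid, hence join-distributive by your first direction, but the cover $\{1,2\}\lessdot\{1,2,3\}$ satisfies $\{1,2\}\vee j=\{1,2,3\}$ for the two incomparable join-irreducibles $j=\{1,3\}$ and $j=\{2,3\}$ (each has the unique lower cover $\{1\}$, resp.\ $\{2\}$, since $\{3\}\notin\mathcal{F}$); there is no unique, nor even unique minimal, such $j$, and indeed $\{1,2\}\vee\{1,3\}=\{1,2\}\vee\{2,3\}$ while $\{1,2\}\vee\bigl(\{1,3\}\wedge\{2,3\}\bigr)=\{1,2\}$, so join-semidistributivity fails. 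Thus your ground set $M$ and the map $\psi$ are not well defined, and the converse collapses at its first step. The repair is to dualize, which is exactly Edelman's construction: label the cover $x\lessdot y$ by the unique maximal element of $\{z\mid z\geq x,\ z\not\geq y\}$. Such a maximal element is meet-irreducible, and uniqueness is precisely where meet-semidistributivity enters: if $m_{1},m_{2}$ are two maximal elements of this set, then $m_{1}\wedge y=x=m_{2}\wedge y$, so $(m_{1}\vee m_{2})\wedge y=x$, hence $m_{1}\vee m_{2}$ lies in the set and maximality forces $m_{1}=m_{2}$. With meet-irreducibles as labels, your chain-independence argument, the identity $\psi(x\vee y)=\psi(x)\cup\psi(y)$, and the verification of the exchange axiom go through as planned. (Your restriction to finite length is also the right reading of the statement: with no finiteness hypothesis it is false, since the chain $\mathbb{Z}\cup\{\pm\infty\}$ is distributive, hence join-distributive, but cannot be the lattice of feasible sets of any antimatroid, as the exchange axiom forces an atom above $\emptyset$ while this chain has no atoms.)
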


In view of this correspondence we can now conclude the following result.

\begin{theorem}\label{thm:join_distributive_sb}
	Every join-distributive lattice admits an SB-labeling.
\end{theorem}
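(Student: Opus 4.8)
The plan is to use Theorem~\ref{thm:join_distributive_antimatroid} to replace the abstract lattice $\PP$ by the concrete lattice $(\mathcal{F},\subseteq)$ of feasible sets of an antimatroid $(M,\mathcal{F})$, and then to exhibit an explicit SB-labeling on this combinatorial model. The natural candidate for the label set is $\Lambda=M$ itself: given a cover relation $X\lessdot Y$ in $(\mathcal{F},\subseteq)$, I would first verify that such a cover corresponds to adding exactly one element, i.e. $Y=X\cup\{x\}$ for a unique $x\in M\setminus X$, and then set $\lambda(X,Y)=x$. The first thing to establish is therefore that covers in an antimatroid lattice are always single-element additions; this is a standard consequence of the antimatroid axioms (the accessibility/exchange property (ii)) together with the fact that join-distributive lattices are upper semimodular, and I would record it as a preliminary observation before defining $\lambda$.

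With $\lambda$ in hand, the task reduces to checking the three SB-axioms for a generic cover configuration $X\lessdot X\cup\{x_{1}\}$ and $X\lessdot X\cup\{x_{2}\}$ with $x_{1}\neq x_{2}$. Axiom~(i) is immediate: the two labels are $x_{1}$ and $x_{2}$, which are distinct by assumption. For axioms~(ii) and~(iii) the key step is to identify the join $(X\cup\{x_{1}\})\vee(X\cup\{x_{2}\})$ inside $(\mathcal{F},\subseteq)$. Here I would use upper semimodularity: since $X\lessdot X\cup\{x_{1}\}$ and $X\lessdot X\cup\{x_{2}\}$, both $X\cup\{x_{1}\}$ and $X\cup\{x_{2}\}$ are covered by their join, so the join must equal $X\cup\{x_{1},x_{2}\}$ and the interval $\bigl[X,\,X\cup\{x_{1},x_{2}\}\bigr]$ is a ``diamond'' of height $2$. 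Consequently every saturated chain from $X$ to the join has length exactly $2$ and inserts the two elements $x_{1}$ and $x_{2}$ in one order or the other; each such chain therefore uses the label $x_{1}$ exactly once and $x_{2}$ exactly once. This simultaneously verifies~(ii) (each of the two labels appears a positive number of times) and~(iii) (no other label can occur, since only $x_{1}$ and $x_{2}$ are inserted along the way).

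I expect the main obstacle to be the rigorous identification of the join and the verification that the interval above $X$ really is a height-two diamond, rather than some taller interval in which additional elements could be inserted and hence additional labels could appear. The delicate point is that meet-semidistributivity must be invoked (or at least the full join-distributive hypothesis) to rule out the possibility that $X\cup\{x_{1}\}$ and $X\cup\{x_{2}\}$ have a join strictly larger than $X\cup\{x_{1},x_{2}\}$, or that some intermediate feasible set forces an unexpected third label. Once the join is pinned down as $X\cup\{x_{1},x_{2}\}$ and the covers inside $\bigl[X,\,X\cup\{x_{1},x_{2}\}\bigr]$ are shown to be exactly the single-element insertions of $x_{1}$ and $x_{2}$, axioms~(ii) and~(iii) follow without further computation. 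Finally, since $(\mathcal{F},\subseteq)$ has least element $\emptyset\in\mathcal{F}$ and is locally finite (feasible sets in a finite ground set, or finite intervals in the infinite case), the labeling $\lambda$ witnesses that $\PP$ is an SB-lattice, completing the proof.
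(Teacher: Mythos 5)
Your proposal is correct and follows essentially the same route as the paper's own proof: pass to the feasible-set lattice of an antimatroid via Theorem~\ref{thm:join_distributive_antimatroid}, label each cover by the unique element added, and then use upper semimodularity together with meet-semidistributivity to show that $[p,\,p_{1}\vee p_{2}]$ is a four-element diamond, from which the three SB-axioms follow. The paper's proof is just a more compressed version of this argument, working with the labels abstractly rather than explicitly identifying the join as $X\cup\{x_{1},x_{2}\}$.
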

\begin{proof}
	Let $\PP$ be a join-distributive lattice. In view of Theorem~\ref{thm:join_distributive_antimatroid}, we can view $\PP$ as a lattice of feasible sets of some antimatroid $(M,\mathcal{F})$, and thus every edge in $\PP$ is determined by a pair $X,Y\in\mathcal{F}$ with $Y\setminus X=\{x\}$. This induces an edge-labeling of $\PP$, which we will denote by $\lambda_{\mathcal{F}}$. Since $\PP$ is upper-semimodular, it follows that for any $p,p_{1},p_{2}\in P$ with $p\lessdot p_{1},p_{2}$, where we write $\bar{p}=p_{1}\vee p_{2}$, we have $p_{1},p_{2}\lessdot\bar{p}$. Since $\PP$ is meet-semidistributive, it follows that the interval $[p,\bar{p}]$ consists only of the four elements $p,p_{1},p_{2},\bar{p}$. Thus we have $\lambda_{\mathcal{F}}(p,p_{1})=\lambda_{\mathcal{F}}(p_{2},\bar{p})$ and $\lambda_{\mathcal{F}}(p,p_{2})=\lambda_{\mathcal{F}}(p_{1},\bar{p})$, as well as $\lambda_{\mathcal{F}}(p,p_{1})\neq\lambda_{\mathcal{F}}(p,p_{2})$. Hence $\lambda_{\mathcal{F}}$ is an SB-labeling of $\PP$.	
\end{proof}

\begin{corollary}\label{cor:join_semidistributive_mobius}
	The M{\"o}bius function of a join-semidistributive lattice takes values only in $\{-1,0,1\}$.
\end{corollary}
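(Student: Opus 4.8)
The plan is to prove the statement one interval at a time via Rota's crosscut theorem. First I would observe that the property is inherited by intervals: if $[p,q]$ is an interval of a join-semidistributive lattice, then meets and joins taken inside $[p,q]$ agree with those of the ambient lattice, so $[p,q]$ is again join-semidistributive, and by local finiteness it is finite. Hence it suffices to bound $\mu(\hat 0,\hat 1)$ for an arbitrary finite join-semidistributive lattice $L$ with bottom $\hat 0$ and top $\hat 1$, and then apply the bound to every interval. Writing $A$ for the set of atoms of $L$ and letting $\Delta=\bigl\{B\subseteq A\mid\bigvee B\neq\hat 1\bigr\}$ be the crosscut complex of non-spanning sets of atoms, Rota's crosscut theorem gives $\mu(\hat 0,\hat 1)=\tilde\chi(\Delta)$, the reduced Euler characteristic of $\Delta$. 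The whole problem thus reduces to determining the homotopy type of $\Delta$.

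The key step is to show that $\Delta$ has at most one minimal non-face, i.e. that there is at most one inclusion-minimal set of atoms whose join is $\hat 1$. If $\bigvee A\neq\hat 1$ this is immediate, since then no set of atoms is spanning and $\Delta$ is the full simplex on $A$. Otherwise $\hat 1$ is a join of atoms, and here I would invoke the canonical join representation of $\hat 1$, which exists and is unique precisely because $L$ is join-semidistributive. Let $S$ be any irredundant set of atoms with $\bigvee S=\hat 1$ and let $C$ be the set of canonical joinands of $\hat 1$. By the defining refinement property of the canonical join representation, every $c\in C$ lies below some atom $s\in S$; since $c$ is a nonzero join-irreducible and $s$ is an atom, this forces $c=s$. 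Hence $C\subseteq S$, and since $C$ already joins to $\hat 1$ while $S$ is irredundant, $C=S$. Thus every irredundant (equivalently, minimal) spanning set of atoms coincides with $C$, so there is a unique minimal non-face. Alternatively, uniqueness can be extracted directly from join-semidistributivity: if $\bigvee B<\hat 1$, then at most one atom $a\notin B$ satisfies $\bigvee(B\cup\{a\})=\hat 1$, for two such atoms $a,a'$ would give $\bigvee B=\bigvee B\vee(a\wedge a')=\hat 1$ (using $a\wedge a'=\hat 0$ and the join-semidistributive identity), contradicting $\bigvee B<\hat 1$.

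With a unique minimal non-face $S$, the structure of $\Delta$ is completely explicit: $\Delta=\partial\langle S\rangle * \langle A\setminus S\rangle$, the join of the boundary of the simplex on $S$ with the full simplex on the remaining atoms. If $A\setminus S\neq\emptyset$, any atom outside $S$ is a cone point, so $\Delta$ is contractible and $\mu(\hat 0,\hat 1)=0$; if $A=S$, then $\Delta\cong S^{|S|-2}$ is a sphere and $\mu(\hat 0,\hat 1)=(-1)^{|S|}\in\{-1,1\}$. Together with the non-spanning case this shows $\mu(\hat 0,\hat 1)\in\{-1,0,1\}$, and applying the argument to each interval $[p,q]$ finishes the proof.

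The main obstacle is the uniqueness of the minimal spanning set of atoms; everything else is bookkeeping once Rota's crosscut theorem is in place. I expect the cleanest route to uniqueness to be through canonical join representations, so the one external input I would need to pin down is the existence and refinement property of canonical join representations in finite join-semidistributive lattices. I would also note that, by lattice duality together with the invariance of the M\"obius function under order reversal, the same conclusion can equivalently be obtained for meet-semidistributive lattices using the dual crosscut complex on coatoms; this is the form most directly comparable to the join-distributive, hence meet-semidistributive, lattices of Theorem~\ref{thm:join_distributive_sb}.
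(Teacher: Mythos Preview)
Your argument is correct and, in fact, proves more than the paper does here: the paper's own proof is the single sentence ``This follows from Theorems~\ref{thm:sb_mobius} and \ref{thm:join_distributive_sb}'', which combines the SB-labeling of join-\emph{distributive} lattices with the general M\"obius bound for SB-lattices, so the word ``semidistributive'' in the statement is evidently a slip for ``distributive''. Your crosscut-theorem route bypasses SB-labelings entirely and, via canonical join representations, handles the genuinely larger class of join-semidistributive lattices; this is essentially the mechanism behind McConville's crosscut-simpliciality result that the paper mentions in the remark immediately following the corollary but does not invoke. The paper's route is shorter given the theorems already in hand; yours is self-contained and strictly more general. One small comment: your ``alternative'' observation that at most one atom can complete any non-spanning $B$ is correct, but as written it does not by itself give a unique \emph{minimal} spanning set---you would still need, say, to take a maximal face inside $S\cup S'$ for two hypothetical minimal non-faces $S,S'$ and extract two distinct completing atoms from it. The canonical-join-representation argument you lead with is the clean one.
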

\begin{proof}
	This follows from Theorems~\ref{thm:sb_mobius} and \ref{thm:join_distributive_sb}.
\end{proof}

\begin{remark}
	Join-distributivity can be seen as a generalization of distributivity, see \eqref{eq:meet_distributivity} and \eqref{eq:join_distributivity} below. In that sense, Theorem~\ref{thm:join_distributive_sb} generalizes \cite{hersh14sb}*{Theorem~5.1}, which states that every distributive lattice is an SB-lattice.
\end{remark}

All join-distributive lattices are by definition meet-semidistributive. Lattices that satisfy the meet-distributive law and the corresponding dual law are called \alert{semidistributive}. Obviously, every distributive lattice is also semidistributive, but semidistributive lattices need no longer be graded. It is known that the M{\"o}bius function of a semidistributive lattice takes values only in $\{-1,0,1\}$~\cite{farley13bijection}, and it would be interesting whether such lattices are always SB-lattices. 

Another generalization of distributivity to ungraded lattices, so-called \alert{trimness}, was introduced by Thomas in \cite{thomas06analogue}. It is the statement of \cite{thomas06analogue}*{Theorem~7} that the M{\"o}bius function of a trim lattice also takes values only in $\{-1,0,1\}$, and again it would be interesting to know whether trim lattices are always SB-lattices. 

An important example of lattices that belong to both previously mentioned classes of lattices are Reading's Cambrian semilattices, see \cite{reading11sortable}*{Theorem~8.1} and \cite{muehle15trimness}*{Theorem~1.1}. The Cambrian semilattices generalize the Tamari lattices to all Coxeter groups. Theorem~5.5 in \cite{hersh14sb} states that the Tamari lattices are SB-lattices. We could produce SB-labelings for some small Cambrian semilattices, but we could not find a uniform definition of such a labeling. We nevertheless pose the following conjecture.

\begin{conjecture}
	Let $W$ be a Coxeter group, and let $\gamma\in W$ be a Coxeter element. The $\gamma$-Cambrian semilattice, \ie the set $C_{\gamma}$ equipped with the weak order on $W$, is an SB-lattice.
\end{conjecture}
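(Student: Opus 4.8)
The plan is to exhibit a single, uniformly defined edge-labeling on $C_{\gamma}$ that simultaneously generalizes the Hersh--M\'esz\'aros labelings of weak order and of the Tamari lattice (the latter being the $\gamma$-Cambrian lattice of type $A$ for a suitable Coxeter element $\gamma$) \cite{hersh14sb}, and to verify the three SB-conditions by reducing every relevant interval to rank-two parabolic data. Throughout I would use Reading's realization of $C_{\gamma}$ as the subposet of $\gamma$-sortable elements inside weak order, together with the fact that the Cambrian projection $\pi_{\downarrow}^{\gamma}$ is an order-preserving retraction and that $C_{\gamma}$ is a meet-subsemilattice of weak order \cite{reading11sortable}. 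Since $C_{\gamma}$ has a least element $\varepsilon$ and is locally finite, it suffices to check the SB-conditions on finite intervals of the form $[p,p_{1}\vee p_{2}]$; when $p_{1}\vee p_{2}$ fails to exist (possible for infinite $W$) the conditions are vacuous, so no genuine difficulty arises from working with a semilattice rather than a lattice.

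First I would record the structural input. As a quotient of the polygonal weak order, $C_{\gamma}$ is itself polygonal: whenever $p$ has two distinct upper covers $p_{1},p_{2}$ with $\bar{p}=p_{1}\vee p_{2}$, the interval $[p,\bar{p}]$ is a polygon, i.e.\ two saturated chains meeting only in $p$ and $\bar{p}$, with $p_{1},p_{2}$ its two atoms. I would also use that $C_{\gamma}$ is semidistributive, indeed trim \cite{muehle15trimness}. The labeling I would take is the canonical one available on any semidistributive lattice: each cover $w\lessdot w'$ receives the join-irreducible label $j(w,w')$, the minimal join-irreducible lying below $w'$ but not below $w$; under the present realization this join-irreducible is encoded by a reflection, namely a shard of the Coxeter arrangement. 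The standard theory of semidistributive lattices guarantees that distinct upper covers of a fixed element carry distinct labels, which is precisely condition~(i).

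The heart of the argument is conditions~(ii) and~(iii), and here the polygon $[p,\bar{p}]$ should be controlled by rank-two data. Because $\bar{p}=p_{1}\vee p_{2}$ is the Cambrian join of $\pi_{\downarrow}^{\gamma}(ps_{i})$ and $\pi_{\downarrow}^{\gamma}(ps_{j})$ for suitable generators $s_{i},s_{j}$, all reflections labeling the two sides of the polygon should lie in the single dihedral parabolic subsystem attached to $\{s_{i},s_{j}\}$. In weak order the corresponding parabolic polygon, read with the simple-generator labeling, spells $s_{i}s_{j}s_{i}\cdots$ up one side and $s_{j}s_{i}s_{j}\cdots$ up the other, so each chain uses both of the two bottom labels and no third label; I would aim to show that the Cambrian quotient preserves this two-label property on every polygon, yielding~(ii) and~(iii).

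The main obstacle---and the reason the statement is only conjectural---is making the labeling uniform across all $W$ and all $\gamma$ and controlling the labels inside polygons of arbitrary length. The Cambrian congruence can contract one side of a parabolic polygon or identify edges, so the clean alternating pattern of weak order need not persist verbatim; the genuine work is a rank-two reduction that classifies, according to the orientation of the rank-two subdiagram induced by $\gamma$, how the congruence reshapes each dihedral polygon, and then checks that exactly two label-classes survive on it with each chain using both. Gluing these local verifications into one globally well-defined labeling is exactly the step that resisted a uniform description in the small examples we computed, so I expect the difficulty to lie there rather than in the surrounding lattice-theoretic bookkeeping.
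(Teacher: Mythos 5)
There is nothing in the paper to compare your attempt against: this statement is posed as an open conjecture, and the author says explicitly that SB-labelings could be produced for some small Cambrian semilattices but that no uniform definition of such a labeling was found. So any complete argument here would be new mathematics. Your text is, by its own admission, a strategy sketch rather than a proof --- the rank-two classification of how the Cambrian congruence reshapes dihedral polygons, which you correctly identify as the crux, is exactly the step that is missing, and it is the same obstruction the paper reports.

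Beyond the admitted gap, however, the one concrete commitment you do make is already wrong: the canonical join-irreducible labeling of a semidistributive lattice fails condition~(iii) in the very smallest nontrivial example. Take $W=A_{2}$ and $\gamma=s_{1}s_{2}$, so that $C_{\gamma}=\{\varepsilon,\,s_{1},\,s_{2},\,s_{1}s_{2},\,s_{1}s_{2}s_{1}\}$ under weak order is the pentagon (the Tamari lattice on five elements). Its join-irreducible elements are $s_{1}$, $s_{2}$ and $s_{1}s_{2}$, and your labeling assigns $\lambda(\varepsilon,s_{1})=s_{1}$, $\lambda(s_{1},s_{1}s_{2})=s_{1}s_{2}$, $\lambda(s_{1}s_{2},s_{1}s_{2}s_{1})=s_{2}$, $\lambda(\varepsilon,s_{2})=s_{2}$, $\lambda(s_{2},s_{1}s_{2}s_{1})=s_{1}$. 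In the interval $[\varepsilon,\,s_{1}\vee s_{2}]=[\varepsilon,\,s_{1}s_{2}s_{1}]$ the saturated chain $\varepsilon\lessdot s_{1}\lessdot s_{1}s_{2}\lessdot s_{1}s_{2}s_{1}$ uses the third label $s_{1}s_{2}$, violating~(iii); in the Cambrian lattice of $I_{2}(m)$ the long side of the polygon picks up $m-2$ such extraneous labels. This failure is structural, not incidental: an SB-labeling must repeat the two bottom labels along the interior edges of every polygon, whereas the minimal new join-irreducible of an interior cover is by construction a label not seen below, so the shard labeling can never satisfy~(iii) on any polygon of length greater than two. This is precisely why Hersh and M{\'e}sz{\'a}ros label the weak order by simple generators (giving the alternating pattern $s_{i},s_{j},s_{i},\ldots$ on parabolic polygons) and use yet another ad hoc labeling for the Tamari lattice, and it is why the conjecture asks for something genuinely new: the difficulty is not only to glue local data uniformly, but to find any local labeling rule that reuses the bottom labels inside contracted Cambrian polygons in the first place.
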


\begin{remark}
	Recently, McConville has investigated a slightly weaker lattice property, namely \alert{crosscut-simpliciality} \cite{mcconville14crosscut}. It follows by definition that every SB-lattice is crosscut-simplicial. He showed in particular that meet-semidistributive lattices are crosscut-simplicial, see \cite{mcconville14crosscut}*{Theorem~1.2}. 
\end{remark}
		
\subsection{The Bruhat Order on Sortable Elements}
	\label{sec:bruhat}
In this section, we consider a special family of join-distributive lattices, namely the set of $\gamma$-sortable elements of a Coxeter group $W$ equipped with the Bruhat order, which we define next.

\begin{definition}
	For $u,v\in W$ we write $u\leq_{B}v$ if and only if there exists a reduced word $v=a_{1}a_{2}\cdots a_{l}$ and indices $1\leq i_{1}<i_{2}<\cdots<i_{k}\leq l$ such that $u=a_{i_{1}}a_{i_{2}}\cdots a_{i_{k}}$. The partial order $\leq_{B}$ is called the \alert{Bruhat order} on $W$.
\end{definition}

Clearly the identity $\varepsilon$ is the least element with respect to $\leq_{B}$. Moreover, the poset $(W,\leq_{B})$ is graded by $\ell_{S}$, but it is in general not a lattice. Thus we restrict our attention to the subposet $\bruh=(C_{\gamma},\leq_{B})$, and in what follows, we index poset-theoretic notions that refer to the Bruhat order on $\gamma$-sortable elements by ``B'', \ie an interval in the poset $\bruh$ will be denoted by $[u,v]_{B}$, and likewise for joins and meets. Recall that a lattice is \alert{finitary} if every principal order ideal is finite. In particular, finitary lattices are locally finite. 

\begin{theorem}\label{thm:bruhat_lattice}
	The poset $\bruh$ is a finitary lattice for every Coxeter group $W$ and every Coxeter element $\gamma\in W$. 
\end{theorem}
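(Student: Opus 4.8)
The plan is to encode each $\gamma$-sortable element by its set of filled positions and to transport the Bruhat order to the inclusion order on these sets, where a known antimatroid structure will supply meets and joins. First I would record the two easy structural facts. The identity $\varepsilon$ is the least element of $\bruh$. For the finitary property, note that $v\leq_B u$ forces $\ell_S(v)\leq\ell_S(u)$, and a Coxeter group of finite rank has at most $n^{k}$ elements of length $k$; hence there are only finitely many elements of $W$ of length at most $\ell_S(u)$, and a fortiori only finitely many $\gamma$-sortable ones. Thus every principal order ideal $\{v\in C_{\gamma}\mid v\leq_B u\}$ is finite, and $\bruh$ is finitary. This reduces the theorem to establishing the lattice property.

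For the lattice property the central object is the map $\alpha_{\gamma}\colon C_{\gamma}\to\wp(\mathbb{Z}_{>0})$ sending $w$ to its set of filled positions $\alpha_{\gamma}(w)$. This map is injective, since the filled positions recover the $\gamma$-sorting word and hence $w$ itself. I would then prove the order correspondence
\[
	u\leq_B v\quad\Longleftrightarrow\quad\alpha_{\gamma}(u)\subseteq\alpha_{\gamma}(v)\qquad\text{for all }u,v\in C_{\gamma},
\]
so that $\alpha_{\gamma}$ is an isomorphism from $\bruh$ onto $\bigl(\mathcal{F},\subseteq\bigr)$, where $\mathcal{F}=\alpha_{\gamma}(C_{\gamma})$. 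Granting this, I invoke the observation, due to Armstrong, that $\mathcal{F}$ is precisely the family of feasible sets of an antimatroid on the positions of $\gamma^{\infty}$. Since the feasible sets of an antimatroid are closed under finite unions, any two members of $\mathcal{F}$ have their union in $\mathcal{F}$ (this is the join), while the union of all feasible subsets of $\alpha_{\gamma}(u)\cap\alpha_{\gamma}(v)$ is again feasible and is their meet; equivalently, Theorem~\ref{thm:join_distributive_antimatroid} gives at once that $\bigl(\mathcal{F},\subseteq\bigr)$ is a (join-distributive) lattice. Transporting along the isomorphism $\alpha_{\gamma}$ then shows that $\bruh$ is a lattice, completing the proof.

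The easy half of the order correspondence is immediate: if $\alpha_{\gamma}(u)\subseteq\alpha_{\gamma}(v)$, then $\gamma(u)$ is literally a subword of the reduced word $\gamma(v)$, whence $u\leq_B v$. The hard half, which I expect to be the main obstacle, is the converse. Starting from $u\leq_B v$, the subword property of Bruhat order produces \emph{some} reduced subword of $\gamma(v)$ spelling $u$, that is, a position set $P\subseteq\alpha_{\gamma}(v)$; but $P$ need not be the sorting positions $\alpha_{\gamma}(u)$, and the lexicographic minimality of $\alpha_{\gamma}(u)$ among all position sets spelling $u$ does not on its own yield the inclusion $\alpha_{\gamma}(u)\subseteq\alpha_{\gamma}(v)$. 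To close this gap I would induct on $\ell_S(v)$ using Reading's recursive description of $\gamma$-sortable elements, peeling off the initial letter $s$ of $\gamma$ and splitting into cases according to whether $s$ is a left descent of $u$ and of $v$: when $s$ is a left descent of $v$ one passes to the Coxeter element $s\gamma s$ and the correspondingly shifted sorting words, and when it is not, both $u$ and $v$ lie in the parabolic subgroup $W_{\langle s\rangle}$ and one recurses there. In each case the point to extract is that the nestedness $b_{1}\supseteq b_{2}\supseteq\cdots$ of the blocks of the sortable element $v$ forces the greedy (lexicographically first) realization of $u$ to remain inside $\alpha_{\gamma}(v)$. I anticipate that the mixed case, where $s$ is a descent of $v$ but not of $u$, together with this reconciliation of the global Bruhat subword condition with the local block structure of the sorting words, is the technically delicate heart of the argument.
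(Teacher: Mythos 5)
Your proposal is correct and takes essentially the same route as the paper: both proofs reduce everything to the correspondence $u\leq_{B}v\Leftrightarrow\alpha_{\gamma}(u)\subseteq\alpha_{\gamma}(v)$ on sortable elements (which the paper, like you, does not prove in full, dismissing it as ``easy to see''), obtain joins from unions of filled-position sets, and derive meets from finiteness of principal ideals. The only minor divergence is at the last step, where the paper invokes the classical fact that a finite poset with a least element and pairwise joins is a lattice on the interval $[\varepsilon,\bar{u}]_{B}$, while you instead appeal to Armstrong's antimatroid structure (union-closure of feasible sets, and the union of feasible subsets of an intersection) --- both are standard and equivalent in substance.
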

\begin{proof}
	First of all, let $w\in C_{\gamma}$ with $\ell_{S}(w)=k$. The interval $[\varepsilon,w]_{B}$ is certainly finite, since $w$ has finite length. Moreover, using the terminology from above, it is easy to see that $w\leq_{B}w'$ if and only if $\alpha_{\gamma}(w)\subseteq\alpha_{\gamma}(w')$. 
	
	Let $u,u'\in C_{\gamma}$. The word $\bar{u}$ defined by $\alpha_{\gamma}(\bar{u})=\alpha_{\gamma}(u)\cup\alpha_{\gamma}(u')$ is again $\gamma$-sortable. In particular, $\bar{u}$ is the least upper bound for both $u$ and $u'$. Hence the interval $[\varepsilon,\bar{u}]_{B}$ is finite, and analogously to before we see that any two elements in this interval possess a join. Hence it is a classical lattice-theoretic result that $[\varepsilon,\bar{u}]_{B}$ is a lattice. It follows immediately that the meet of $u$ and $u'$ exists as well, and the proof is complete.
\end{proof}

Theorem~\ref{thm:bruhat_lattice} was already mentioned in \cite{armstrong09sorting}*{Section~6}. It should be remarked
that in general $\bruh$ is an infinite lattice with no greatest element, which implies in particular that $\bruh$ is no complete lattice. The following result is also implicit in \cite{armstrong09sorting}.

\begin{theorem}[\cite{armstrong09sorting}]\label{thm:bruhat_join_semidistributive}
	The lattice $\bruh$ is join-distributive for every Coxeter group $W$ and every Coxeter element $\gamma\in W$. 
\end{theorem}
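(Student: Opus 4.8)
The plan is to establish join-distributivity by exhibiting an antimatroid whose lattice of feasible sets is isomorphic to $\bruh$, and then invoke Theorem~\ref{thm:join_distributive_antimatroid}. The proof of Theorem~\ref{thm:bruhat_lattice} already does most of the structural work: it shows that $u\leq_{B}u'$ in $C_{\gamma}$ if and only if $\alpha_{\gamma}(u)\subseteq\alpha_{\gamma}(u')$, and that the set $\alpha_{\gamma}(u)\cup\alpha_{\gamma}(u')$ is again of the form $\alpha_{\gamma}(\bar{u})$ for some $\gamma$-sortable $\bar{u}$. This means the map $\alpha_{\gamma}$ is an order-embedding of $\bruh$ into the Boolean lattice on the positions of $\gamma^{\infty}$, and its image is closed under union. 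So the natural candidate is the set system $(M,\mathcal{F})$ where $M=\mathbb{N}$ indexes the positions of $\gamma^{\infty}$ and $\mathcal{F}=\bigl\{\alpha_{\gamma}(w)\mid w\in C_{\gamma}\bigr\}$. By construction $(\mathcal{F},\subseteq)\cong\bruh$, so it remains only to verify the two antimatroid axioms for this family.

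The first axiom, $\emptyset\in\mathcal{F}$, is immediate since $\alpha_{\gamma}(\varepsilon)=\emptyset$. The substance is the exchange axiom: given $X=\alpha_{\gamma}(u)$ and $Y=\alpha_{\gamma}(u')$ with $Y\not\subseteq X$, I must produce a single position $x\in X\setminus Y$ such that $X\cup\{x\}$ is again of the form $\alpha_{\gamma}(w)$ for some $\gamma$-sortable $w$. In lattice terms, since $\bruh$ is already known to be a finitary lattice, the cleanest route is to reformulate the exchange axiom as a covering condition on $\bruh$ itself: one shows that for every $\gamma$-sortable $u$ and every element $u'$ with $u\not\leq_{B}u'$, there is a cover $u\lessdot_{B}w$ in $\bruh$ with $w\leq_{B}u\vee_{B}u'$, whose associated new filled position lies in $\alpha_{\gamma}(u')\setminus\alpha_{\gamma}(u)$. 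Concretely, I would take the smallest position $j$ in $\alpha_{\gamma}(u')\setminus\alpha_{\gamma}(u)$ and argue that adjoining $j$ to $\alpha_{\gamma}(u)$ yields a valid set of filled positions of a $\gamma$-sortable element — that is, that the resulting subword of $\gamma^{\infty}$ is reduced (its length goes up by exactly one) and that the block-containment condition $b_{1}\supseteq b_{2}\supseteq\cdots$ is preserved.

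The main obstacle is exactly this last verification: adding a filled position naively need not keep the word reduced, nor need it respect the nested-block structure that defines $\gamma$-sortability. The careful choice of which position to add is what makes the argument work, and here I would lean on the combinatorial characterization of $\gamma$-sortable elements and their subword structure in $\gamma^{\infty}$ from \cite{reading11sortable}. In particular, the standard fact that $\gamma$-sortable elements are closed under taking certain ``prefixes'' and that the sorting word is a Bruhat-minimal reduced word should let me control both the length and the block condition simultaneously when passing to the minimal missing position. Since Armstrong has already asserted this result in \cite{armstrong09sorting}*{Section~6}, an acceptable alternative is to cite his construction of the antimatroid directly and merely confirm that its feasible sets coincide with the sets $\alpha_{\gamma}(w)$; but giving the explicit exchange argument above is more self-contained and is the step I expect to spend the most effort on.
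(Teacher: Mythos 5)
A preliminary remark on the comparison itself: the paper contains \emph{no} proof of this theorem. It is stated with the attribution to \cite{armstrong09sorting}, and the surrounding text says only that the result is ``implicit'' in that paper; the intended argument is exactly the antimatroid one, which the paper alludes to again after Corollary~\ref{cor:bruhat_mobius} when it associates an antimatroid with $\bruh$. So on the level of strategy — realize $\bruh$ as the lattice of feasible sets $\mathcal{F}=\bigl\{\alpha_{\gamma}(w)\mid w\in C_{\gamma}\bigr\}$, using the order-isomorphism from the proof of Theorem~\ref{thm:bruhat_lattice}, and invoke Theorem~\ref{thm:join_distributive_antimatroid} — your proposal coincides with what the paper and Armstrong have in mind.

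As a self-contained proof, however, your write-up has one genuine gap, and it is the one you flag yourself: the exchange axiom is never verified. You reduce it to the claim that if $j$ is the smallest element of $\alpha_{\gamma}(u')\setminus\alpha_{\gamma}(u)$ then $\alpha_{\gamma}(u)\cup\{j\}\in\mathcal{F}$, and defer this to unspecified machinery from \cite{reading11sortable}. The claim is true, but the direct verification you envisage (show the enlarged subword stays reduced and its blocks stay nested) is the painful route: those properties are not local, and what protects the choice of $j$ is the nestedness of the blocks of $u'$, not of $u$, so you would end up redeveloping Armstrong's lemmas. The efficient way to close the gap uses two facts. (a) $\mathcal{F}$ is closed under unions — this is precisely the assertion already made (also without proof) in the proof of Theorem~\ref{thm:bruhat_lattice}, so you may quote it. (b) $\mathcal{F}$ is closed under initial segments: if $\alpha_{\gamma}(w)=\{p_{1}<\cdots<p_{k}\}$ then $\{p_{1},\ldots,p_{i}\}\in\mathcal{F}$ for every $i$, since an initial segment of a sorting word is reduced, its blocks remain nested, and a short greedy argument shows it is still the leftmost reduced occurrence of the element it spells. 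Given (a) and (b), exchange is three lines: write $\alpha_{\gamma}(u')=\{q_{1}<\cdots<q_{m}\}$ and let $i$ be minimal with $q_{i}\notin\alpha_{\gamma}(u)$; then $\{q_{1},\ldots,q_{i}\}\in\mathcal{F}$ by (b), hence $\alpha_{\gamma}(u)\cup\{q_{1},\ldots,q_{i}\}=\alpha_{\gamma}(u)\cup\{q_{i}\}\in\mathcal{F}$ by (a), and $q_{i}$ is exactly your smallest missing position. Two smaller points: the exchange axiom as printed in the paper has a typo — it must ask for $x\in Y\setminus X$, not $x\in X\setminus Y$ — and your opening sentence copies the typo before your later text silently corrects it; and Theorem~\ref{thm:join_distributive_antimatroid} is, in its source, a statement about finite lattices, so for infinite $W$ you should note that $\bruh$ is finitary and all relevant conditions are checked inside the finite intervals $[\varepsilon,w]_{B}$ (a point the paper glosses over as well).
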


Hence the second part of Theorem~\ref{thm:main_sb} follows immediately from Theorem~\ref{thm:join_distributive_sb}. 

\begin{corollary}\label{cor:bruhat_sb}
	The lattice $\bruh$ admits an SB-labeling for every Coxeter group $W$ and every Coxeter element $\gamma\in W$.
\end{corollary}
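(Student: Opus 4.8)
The plan is to deduce this statement directly from the two results already at our disposal: Theorem~\ref{thm:bruhat_join_semidistributive}, which asserts that $\bruh$ is join-distributive, and Theorem~\ref{thm:join_distributive_sb}, which supplies an SB-labeling for any join-distributive lattice. Since the latter theorem does not merely assert existence but exhibits a concrete labeling $\lambda_{\mathcal{F}}$ through the antimatroid realization of Theorem~\ref{thm:join_distributive_antimatroid}, the entire task reduces to checking that $\bruh$ genuinely meets the hypotheses under which that labeling is guaranteed to be an SB-labeling.

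Concretely, I would first invoke Theorem~\ref{thm:bruhat_join_semidistributive} to place $\bruh$ in the class of join-distributive lattices, and then apply Theorem~\ref{thm:join_distributive_sb} to obtain an edge-labeling satisfying properties (i)--(iii) in the definition of an SB-labeling. If one wishes to promote the phrase ``admits an SB-labeling'' to the stronger assertion that $\bruh$ is an honest SB-lattice, I would additionally recall from Theorem~\ref{thm:bruhat_lattice} that $\bruh$ is finitary, hence locally finite, and that it has the identity $\varepsilon$ as its least element; these are precisely the standing requirements in the definition of an SB-lattice.

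The only point that deserves a moment's care is that $\bruh$ is in general infinite and possesses no greatest element, whereas the antimatroid correspondence of Theorem~\ref{thm:join_distributive_antimatroid} and the labeling construction in the proof of Theorem~\ref{thm:join_distributive_sb} are most naturally phrased for finite lattices. I expect this to be the main --- and, in truth, the only --- obstacle, but it dissolves on inspection: every condition entering the definition of an SB-labeling is local, referring solely to the four-element intervals $[p,p_{1}\vee p_{2}]_{B}$, and each such interval is finite because $\bruh$ is finitary. Consequently $\lambda_{\mathcal{F}}$ is well defined edge by edge and the verification of properties (i)--(iii) is untouched by the global infiniteness of the lattice. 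Thus no argument beyond citing the two theorems is required, exactly in the spirit of the proof of Corollary~\ref{cor:join_semidistributive_mobius}.
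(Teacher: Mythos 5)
Your proposal is correct and follows essentially the same route as the paper, which proves this corollary simply by combining Theorem~\ref{thm:bruhat_join_semidistributive} (join-distributivity of $\bruh$, due to Armstrong) with Theorem~\ref{thm:join_distributive_sb}. Your additional remarks on local finiteness and on the infinitude of the associated antimatroid are sound and align with observations the paper itself makes elsewhere (after Theorem~\ref{thm:bruhat_lattice} and after Corollary~\ref{cor:bruhat_mobius}), but they are not part of its one-line proof.
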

\begin{proof}
	This follows from Theorems~\ref{thm:join_distributive_sb} and \ref{thm:bruhat_join_semidistributive}.
\end{proof}

\begin{proof}[Proof of Theorem~\ref{thm:main_sb}]
	This follows from Theorem~\ref{thm:join_distributive_sb} and Corollary~\ref{cor:bruhat_sb}.
\end{proof}

\begin{corollary}\label{cor:bruhat_mobius}
	The M{\"o}bius function of $\bruh$ takes values only in $\{-1,0,1\}$ for every Coxeter group $W$ and every Coxeter element $\gamma\in W$.
\end{corollary}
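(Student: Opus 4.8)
The plan is to recognize Corollary~\ref{cor:bruhat_mobius} as an immediate consequence of the machinery already assembled, so that essentially no new work is required. The key point is that $\bruh$ satisfies all the hypotheses needed to be an SB-lattice in the sense of Section~\ref{sec:sb_labelings}: one needs a locally finite lattice with a least element that admits an SB-labeling. My strategy is simply to check each of these three ingredients against a previously established result and then to quote Theorem~\ref{thm:sb_mobius}.

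First I would verify the structural hypotheses. By Theorem~\ref{thm:bruhat_lattice} the poset $\bruh$ is a finitary lattice, and since every finitary lattice is locally finite, every interval of $\bruh$ is finite; in particular the M{\"o}bius function is well-defined on every interval, so the statement even makes sense for a lattice that may be infinite. Moreover, the identity $\varepsilon$ is the least element of $\bruh$, as recorded in the discussion preceding the definition of the Bruhat order. Hence $\bruh$ is a locally finite lattice with a least element.

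Next I would invoke Corollary~\ref{cor:bruhat_sb}, which furnishes an SB-labeling of $\bruh$ for every Coxeter group $W$ and every Coxeter element $\gamma\in W$. Combining this with the previous paragraph shows that $\bruh$ is an SB-lattice. The desired conclusion then follows directly from Theorem~\ref{thm:sb_mobius}, which asserts that the M{\"o}bius function of any SB-lattice takes values only in $\{-1,0,1\}$.

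Since the argument is purely a matter of assembling earlier results, I do not anticipate any genuine obstacle. The only point meriting a moment's care is confirming that $\bruh$ meets the definitional requirements of an SB-lattice, and in particular local finiteness, which is exactly what finitariness from Theorem~\ref{thm:bruhat_lattice} supplies. The fact that $\bruh$ is in general infinite and may lack a greatest element (as remarked after Theorem~\ref{thm:bruhat_lattice}) is irrelevant here, since neither the definition of an SB-lattice nor the statement of Theorem~\ref{thm:sb_mobius} requires finiteness or a top element.
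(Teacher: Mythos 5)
Your proof is correct and follows exactly the paper's (implicit) argument: the paper derives Corollary~\ref{cor:bruhat_mobius} by combining Theorem~\ref{thm:sb_mobius} with the fact that $\bruh$ is an SB-lattice, which is precisely Theorem~\ref{thm:bruhat_lattice} plus Corollary~\ref{cor:bruhat_sb} as you assembled them. Your extra care in checking local finiteness (via finitariness) and the least element $\varepsilon$ is a welcome explicit verification of details the paper leaves tacit.
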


The correspondence between join-distributive lattices and antimatroids allows us to associate an antimatroid with $\bruh$. In general, this antimatroid is infinite, and thus the set of labels of the SB-labeling defined in the proof of Theorem~\ref{thm:join_distributive_sb} is potentially infinite. We notice that this labeling can be defined globally by
\begin{displaymath}
	\lambda_{\gamma}:\mathcal{E}(\bruh)\to\mathbb{N},\quad (u,v)\mapsto\alpha_{\gamma}(v)\setminus\alpha_{\gamma}(u).
\end{displaymath}
An analogous labeling was used in \cite{kallipoliti13on} to prove topological properties of the Cambrian semilattices. Now consider the map
\begin{displaymath}
	\eta:\mathbb{N}\to S,\quad i\mapsto \begin{cases}s_{n}, & \text{if}\;i\equiv 0\pmod{n},\\ s_{i\bmod{n}}, & \text{otherwise}.\end{cases}
\end{displaymath}
If we concatenate these two maps, then we obtain another SB-labeling of $\bruh$, which in contrast to $\lambda_{\gamma}$ uses only a finite set of labels:
\begin{align}\label{eq:sb_bruhat}
	b_{\gamma}:\mathcal{E}(\bruh)\to S,\quad (u,v)\mapsto\eta\bigl(\alpha_{\gamma}(v)\setminus\alpha_{\gamma}(u)\bigr).
\end{align}

\begin{example}
	Figure~\ref{fig:bruhat_sortable} shows two Bruhat lattices labeled by the SB-labeling defined in \eqref{eq:sb_bruhat}. The lattice in Figure~\ref{fig:bruhat_sortable_a3} is associated with the Coxeter group $A_{3}$ and the Coxeter element given by the oriented Coxeter diagram
	\raisebox{-.16cm}{\begin{tikzpicture}\small
		\draw(0,0) node(v1){$s_{1}$};
		\draw(1,0) node(v2){$s_{2}$};
		\draw(2,0) node(v3){$s_{3}$};
		\draw[thick,->](v1) -- (.55,0);
		\draw[thick](.55,0) -- (v2);
		\draw[thick,->](v2) -- (1.55,0);
		\draw[thick](1.55,0) -- (v3);
	\end{tikzpicture}}. Figure~\ref{fig:bruhat_sortable_c2} shows the first seven ranks of the lattice associated with the affine Coxeter group $\tilde{C}_{2}$ subject to the Coxeter element given by the oriented Coxeter diagram
		\raisebox{-.16cm}{\begin{tikzpicture}\small
		\draw(0,0) node(v1){$s_{1}$};
		\draw(.5,.2) node{\tiny 4};
		\draw(1,0) node(v2){$s_{2}$};
		\draw(1.5,.2) node{\tiny 4};
		\draw(2,0) node(v3){$s_{3}$};
		\draw[thick,->](v1) -- (.55,0);
		\draw[thick](.55,0) -- (v2);
		\draw[thick,->](v3) -- (1.45,0);
		\draw[thick](1.45,0) -- (v2);
	\end{tikzpicture}}.
	(See Section~\ref{sec:distributivity} for an explanation of the connection between Coxeter elements and orientations of the Coxeter diagram.)
	
	\begin{figure}
		\subfigure[$\mathcal{B}_{s_{1}s_{2}s_{3}}$ associated with $A_{3}$.]{\label{fig:bruhat_sortable_a3}
			\begin{tikzpicture}\small
				\def\x{1.6};
				\def\y{1.2};
				\draw(2*\x,1*\y) node(n1){$\varepsilon$};
				\draw(1*\x,2*\y) node(n2){$s_{1}$};
				\draw(2*\x,2*\y) node(n3){$s_{2}$};
				\draw(3*\x,2*\y) node(n4){$s_{3}$};
				\draw(1*\x,3*\y) node(n5){$s_{1}s_{2}$};
				\draw(2*\x,3*\y) node(n6){$s_{1}s_{3}$};
				\draw(3*\x,3*\y) node(n7){$s_{2}s_{3}$};
				\draw(1*\x,4*\y) node(n8){$s_{1}s_{2}\vert s_{1}$};
				\draw(2*\x,4*\y) node(n9){$s_{1}s_{2}s_{3}$};
				\draw(3*\x,4*\y) node(n10){$s_{2}s_{3}\vert s_{2}$};
				\draw(1.5*\x,5*\y) node(n11){$s_{1}s_{2}s_{3}\vert s_{1}$};
				\draw(2.5*\x,5*\y) node(n12){$s_{1}s_{2}s_{3}\vert s_{2}$};
				\draw(2*\x,6*\y) node(n13){$s_{1}s_{2}s_{3}\vert s_{1}s_{2}$};
				\draw(2*\x,7*\y) node(n14){$s_{1}s_{2}s_{3}\vert s_{1}s_{2}\vert s_{1}$};
				\draw(n1) -- (n2) node[fill=white] at(1.5*\x,1.5*\y) {\tiny\color{gray!80!black}$s_{1}$};
				\draw(n1) -- (n3) node[fill=white] at(2*\x,1.5*\y) {\tiny\color{gray!80!black}$s_{2}$};
				\draw(n1) -- (n4) node[fill=white] at(2.5*\x,1.5*\y) {\tiny\color{gray!80!black}$s_{3}$};
				\draw(n2) -- (n5) node[fill=white] at(1*\x,2.5*\y) {\tiny\color{gray!80!black}$s_{2}$};
				\draw(n2) -- (n6) node[fill=white] at(1.25*\x,2.25*\y) {\tiny\color{gray!80!black}$s_{3}$};
				\draw(n3) -- (n5) node[fill=white] at(1.75*\x,2.25*\y) {\tiny\color{gray!80!black}$s_{1}$};
				\draw(n3) -- (n7) node[fill=white] at(2.25*\x,2.25*\y) {\tiny\color{gray!80!black}$s_{3}$};
				\draw(n4) -- (n6) node[fill=white] at(2.75*\x,2.25*\y) {\tiny\color{gray!80!black}$s_{1}$};
				\draw(n4) -- (n7) node[fill=white] at(3*\x,2.5*\y) {\tiny\color{gray!80!black}$s_{2}$};
				\draw(n5) -- (n8) node[fill=white] at(1*\x,3.5*\y) {\tiny\color{gray!80!black}$s_{1}$};
				\draw(n5) -- (n9) node[fill=white] at(1.5*\x,3.5*\y) {\tiny\color{gray!80!black}$s_{3}$};
				\draw(n6) -- (n9) node[fill=white] at(2*\x,3.5*\y) {\tiny\color{gray!80!black}$s_{2}$};
				\draw(n7) -- (n9) node[fill=white] at(2.5*\x,3.5*\y) {\tiny\color{gray!80!black}$s_{1}$};
				\draw(n7) -- (n10) node[fill=white] at(3*\x,3.5*\y) {\tiny\color{gray!80!black}$s_{2}$};
				\draw(n8) -- (n11) node[fill=white] at(1.25*\x,4.5*\y) {\tiny\color{gray!80!black}$s_{3}$};
				\draw(n9) -- (n11) node[fill=white] at(1.75*\x,4.5*\y) {\tiny\color{gray!80!black}$s_{1}$};
				\draw(n9) -- (n12) node[fill=white] at(2.25*\x,4.5*\y) {\tiny\color{gray!80!black}$s_{2}$};
				\draw(n10) -- (n12) node[fill=white] at(2.75*\x,4.5*\y) {\tiny\color{gray!80!black}$s_{1}$};
				\draw(n11) -- (n13) node[fill=white] at(1.75*\x,5.5*\y) {\tiny\color{gray!80!black}$s_{2}$};
				\draw(n12) -- (n13) node[fill=white] at(2.25*\x,5.5*\y) {\tiny\color{gray!80!black}$s_{1}$};
				\draw(n13) -- (n14) node[fill=white] at(2*\x,6.5*\y) {\tiny\color{gray!80!black}$s_{1}$};
			\end{tikzpicture}}
		\subfigure[The first seven ranks of $\mathcal{B}_{s_{1}s_{3}s_{2}}$ associated with $\tilde{C}_{2}$.]{\label{fig:bruhat_sortable_c2}
			\begin{tikzpicture}\small
				\def\x{1.6};
				\def\y{1.2};
				\draw(2*\x,1*\y) node(n1){$\varepsilon$};
				\draw(1*\x,2*\y) node(n2){$s_{1}$};
				\draw(2*\x,2*\y) node(n3){$s_{2}$};
				\draw(3*\x,2*\y) node(n4){$s_{3}$};
				\draw(1*\x,3*\y) node(n5){$s_{1}s_{2}$};
				\draw(2*\x,3*\y) node(n6){$s_{1}s_{3}$};
				\draw(3*\x,3*\y) node(n7){$s_{3}s_{2}$};
				\draw(1*\x,4*\y) node(n8){$s_{1}s_{2}\vert s_{1}$};
				\draw(2*\x,4*\y) node(n9){$s_{1}s_{3}s_{2}$};
				\draw(3*\x,4*\y) node(n10){$s_{3}s_{2}\vert s_{3}$};
				\draw(.5*\x,5*\y) node(n11){$s_{1}s_{2}\vert s_{1}s_{2}$};
				\draw(1.5*\x,5*\y) node(n12){$s_{1}s_{3}s_{2}\vert s_{1}$};
				\draw(2.5*\x,5*\y) node(n13){$s_{1}s_{3}s_{2}\vert s_{3}$};
				\draw(3.5*\x,5*\y) node(n14){$s_{3}s_{2}\vert s_{3}s_{2}$};
				\draw(1*\x,6*\y) node(n15){$s_{1}s_{3}s_{2}\vert s_{1}s_{2}$};
				\draw(2*\x,6*\y) node(n16){$s_{1}s_{3}s_{2}\vert s_{1}s_{3}$};
				\draw(3*\x,6*\y) node(n17){$s_{1}s_{3}s_{2}\vert s_{3}s_{2}$};
				\draw(.5*\x,7*\y) node(n18){$s_{1}s_{3}s_{2}\vert s_{1}s_{2}\vert s_{1}$};
				\draw(2*\x,7*\y) node(n19){$s_{1}s_{3}s_{2}\vert s_{1}s_{3}s_{2}$};
				\draw(1*\x,8*\y) node(n20){$s_{1}s_{3}s_{2}\vert s_{1}s_{3}s_{2}\vert s_{1}$};
				\draw(3*\x,8*\y) node(n21){$s_{1}s_{3}s_{2}\vert s_{1}s_{3}s_{2}\vert s_{3}$};
				\draw(n1) -- (n2) node[fill=white] at(1.5*\x,1.5*\y) {\tiny\color{gray!80!black}$s_{1}$};
				\draw(n1) -- (n3) node[fill=white] at(2*\x,1.5*\y) {\tiny\color{gray!80!black}$s_{2}$};
				\draw(n1) -- (n4) node[fill=white] at(2.5*\x,1.5*\y) {\tiny\color{gray!80!black}$s_{3}$};
				\draw(n2) -- (n5) node[fill=white] at(1*\x,2.5*\y) {\tiny\color{gray!80!black}$s_{2}$};
				\draw(n2) -- (n6) node[fill=white] at(1.25*\x,2.25*\y) {\tiny\color{gray!80!black}$s_{3}$};
				\draw(n3) -- (n5) node[fill=white] at(1.75*\x,2.25*\y) {\tiny\color{gray!80!black}$s_{1}$};
				\draw(n3) -- (n7) node[fill=white] at(2.25*\x,2.25*\y) {\tiny\color{gray!80!black}$s_{3}$};
				\draw(n4) -- (n6) node[fill=white] at(2.75*\x,2.25*\y) {\tiny\color{gray!80!black}$s_{1}$};
				\draw(n4) -- (n7) node[fill=white] at(3*\x,2.5*\y) {\tiny\color{gray!80!black}$s_{2}$};
				\draw(n5) -- (n8) node[fill=white] at(1*\x,3.5*\y) {\tiny\color{gray!80!black}$s_{1}$};
				\draw(n5) -- (n9) node[fill=white] at(1.5*\x,3.5*\y) {\tiny\color{gray!80!black}$s_{3}$};
				\draw(n6) -- (n9) node[fill=white] at(2*\x,3.5*\y) {\tiny\color{gray!80!black}$s_{2}$};
				\draw(n7) -- (n9) node[fill=white] at(2.5*\x,3.5*\y) {\tiny\color{gray!80!black}$s_{1}$};
				\draw(n7) -- (n10) node[fill=white] at(3*\x,3.5*\y) {\tiny\color{gray!80!black}$s_{3}$};
				\draw(n8) -- (n11) node[fill=white] at(.75*\x,4.5*\y) {\tiny\color{gray!80!black}$s_{2}$};
				\draw(n8) -- (n12) node[fill=white] at(1.25*\x,4.5*\y) {\tiny\color{gray!80!black}$s_{3}$};
				\draw(n9) -- (n12) node[fill=white] at(1.75*\x,4.5*\y) {\tiny\color{gray!80!black}$s_{1}$};
				\draw(n9) -- (n13) node[fill=white] at(2.25*\x,4.5*\y) {\tiny\color{gray!80!black}$s_{3}$};
				\draw(n10) -- (n13) node[fill=white] at(2.75*\x,4.5*\y) {\tiny\color{gray!80!black}$s_{1}$};
				\draw(n10) -- (n14) node[fill=white] at(3.25*\x,4.5*\y) {\tiny\color{gray!80!black}$s_{2}$};
				\draw(n11) -- (n15) node[fill=white] at(.75*\x,5.5*\y) {\tiny\color{gray!80!black}$s_{3}$};
				\draw(n12) -- (n15) node[fill=white] at(1.25*\x,5.5*\y) {\tiny\color{gray!80!black}$s_{2}$};
				\draw(n12) -- (n16) node[fill=white] at(1.75*\x,5.5*\y) {\tiny\color{gray!80!black}$s_{3}$};
				\draw(n13) -- (n16) node[fill=white] at(2.25*\x,5.5*\y) {\tiny\color{gray!80!black}$s_{1}$};
				\draw(n13) -- (n17) node[fill=white] at(2.75*\x,5.5*\y) {\tiny\color{gray!80!black}$s_{2}$};
				\draw(n14) -- (n17) node[fill=white] at(3.25*\x,5.5*\y) {\tiny\color{gray!80!black}$s_{1}$};
				\draw(n15) -- (n18) node[fill=white] at(.75*\x,6.5*\y) {\tiny\color{gray!80!black}$s_{1}$};
				\draw(n15) -- (n19) node[fill=white] at(1.5*\x,6.5*\y) {\tiny\color{gray!80!black}$s_{3}$};
				\draw(n16) -- (n19) node[fill=white] at(2*\x,6.5*\y) {\tiny\color{gray!80!black}$s_{2}$};
				\draw(n17) -- (n19) node[fill=white] at(2.5*\x,6.5*\y) {\tiny\color{gray!80!black}$s_{1}$};
				\draw(n18) -- (n20) node[fill=white] at(.75*\x,7.5*\y) {\tiny\color{gray!80!black}$s_{3}$};
				\draw(n19) -- (n20) node[fill=white] at(1.5*\x,7.5*\y) {\tiny\color{gray!80!black}$s_{1}$};
				\draw(n19) -- (n21) node[fill=white] at(2.5*\x,7.5*\y) {\tiny\color{gray!80!black}$s_{3}$};
			\end{tikzpicture}}
		\caption{Two Bruhat lattices of sortable elements associated with the Coxeter groups $A_{3}$ and $\tilde{C}_{2}$, respectively. Their edges are labeled by the SB-labeling defined in \eqref{eq:sb_bruhat}.}
		\label{fig:bruhat_sortable}
	\end{figure}
\end{example}

\section{Distributivity of the Bruhat Order on Sortable Elements}
	\label{sec:distributivity}
Recall that a lattice $\PP=(P,\leq)$ is \alert{distributive} if it satisfies one of the two following, equivalent, properties for all $p,q,r\in P$:
\begin{align}
	p & \wedge(q\vee r) = (p\wedge q) \vee (p\wedge r)\label{eq:meet_distributivity}\\
	p & \vee(q\wedge r) = (p\vee q) \wedge (p\vee r)\label{eq:join_distributivity}
\end{align}

Armstrong remarked in \cite{armstrong09sorting} that for a certain Coxeter element of the Coxeter group $A_{n}$ the lattice $\bruh$ coincides with the lattice of order ideals of the root poset of $A_{n}$. (For any undefined terminology, we refer once more to \cite{bjorner05combinatorics}.) Hence this particular lattice is distributive. However, Armstrong remarked that this ``phenomenon, unfortunately, does not persist for all types''. In this section we partially answer the question for which finite Coxeter groups and which Coxeter elements the lattice $\bruh$ is distributive. 

\begin{lemma}\label{lem:subgroups_ideals}
	Let $W$ be a Coxeter group, and $\gamma\in W$ a Coxeter element. If $W'$ is a standard parabolic subgroup of $W$ and $\gamma'\in W'$ denotes the restriction of $\gamma$ to $W'$, then $\mathcal{B}_{\gamma'}$ is an order ideal of $\bruh$.
\end{lemma}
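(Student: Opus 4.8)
The plan is to establish two facts, from which the statement follows directly. First, that $C_{\gamma'}=C_{\gamma}\cap W'$ as subsets of $W$, with the Bruhat order of $W'$ agreeing on $C_{\gamma'}$ with the restriction of the Bruhat order of $W$, so that $\mathcal{B}_{\gamma'}$ sits inside $\bruh$ as an induced subposet. Second, that this subset is downward closed in $\bruh$, \ie if $u\in C_{\gamma}$ satisfies $u\leq_{B}w$ for some $w\in C_{\gamma'}$, then $u\in C_{\gamma'}$. Together these say precisely that $\mathcal{B}_{\gamma'}$ is an order ideal of $\bruh$. Throughout, write $\gamma=s_{1}s_{2}\cdots s_{n}$, let $W'=W_{J}$ for some $J\subseteq S$, and let $J=\{s_{j_{1}},\ldots,s_{j_{k}}\}$ with $j_{1}<\cdots<j_{k}$, so that the restriction is $\gamma'=s_{j_{1}}s_{j_{2}}\cdots s_{j_{k}}$.

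The technical core is the first fact, which is the compatibility of sorting with standard parabolic subgroups. The key observation is that there is an order-preserving and label-preserving bijection $\phi$ between those positions of $\gamma^{\infty}$ whose letter lies in $J$ and all positions of $(\gamma')^{\infty}$: the $m$-th such position in the $i$-th block of $\gamma^{\infty}$ is sent to the position $(i-1)k+m$ of $(\gamma')^{\infty}$, and both carry the letter $s_{j_{m}}$. I would use the standard fact (see \cite{bjorner05combinatorics}) that an element lies in $W_{J}$ if and only if its reduced words use only generators from $J$. Hence for $w\in W'$ every reduced subword of $\gamma^{\infty}$ representing $w$ — in particular the $\gamma$-sorting word $\gamma(w)$ — occupies only $J$-positions; since $\phi$ preserves the left-to-right order, it carries the lexicographically first such subword to the lexicographically first reduced subword of $(\gamma')^{\infty}$, which is $\gamma'(w)$. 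Thus $\phi\bigl(\alpha_{\gamma}(w)\bigr)=\alpha_{\gamma'}(w)$ and the block decompositions correspond, so $w$ is $\gamma$-sortable if and only if it is $\gamma'$-sortable; this yields $C_{\gamma'}=C_{\gamma}\cap W'$. Alternatively, one may simply cite Reading's compatibility of sortable elements with parabolic subgroups, see \cite{reading11sortable}.

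With the characterization from the proof of Theorem~\ref{thm:bruhat_lattice} that $u\leq_{B}u'$ if and only if $\alpha_{\gamma}(u)\subseteq\alpha_{\gamma}(u')$, together with its analogue inside $W'$, both remaining points become immediate. Since $\phi$ is a bijection and $\alpha_{\gamma}(w)=\phi^{-1}\bigl(\alpha_{\gamma'}(w)\bigr)$ for $w\in C_{\gamma'}$, containment of filled-position sets is preserved, so the two Bruhat orders agree on $C_{\gamma'}$ and $\mathcal{B}_{\gamma'}$ is an induced subposet of $\bruh$. For downward closure, take $w\in C_{\gamma'}$ and $u\in C_{\gamma}$ with $u\leq_{B}w$; then $\alpha_{\gamma}(u)\subseteq\alpha_{\gamma}(w)$, and the latter consists only of $J$-positions, so $\gamma(u)$ uses only generators from $J$. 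Hence $u\in W_{J}=W'$, and combined with $u\in C_{\gamma}$ this gives $u\in C_{\gamma}\cap W'=C_{\gamma'}$.

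I expect the main obstacle to be the first fact, and specifically verifying carefully that passing to the $J$-positions of $\gamma^{\infty}$ really does commute with the lexicographic selection defining the sorting word and with the nesting condition defining sortability. Once the position bijection $\phi$ is in place this is essentially bookkeeping, but it is the step that genuinely requires the parabolic-support lemma and an honest check that $\phi$ respects the lexicographic order. The downward-closure step, by contrast, is short and rests only on the already-established $\alpha$-description of the Bruhat order together with the support argument.
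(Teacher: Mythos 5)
Your proof is correct and follows essentially the same route as the paper's: both rest on the observation that for $w\in W'$ the $\gamma'$-sorting word corresponds to the $\gamma$-sorting word under the obvious correspondence between positions of $(\gamma')^{\infty}$ and $J$-positions of $\gamma^{\infty}$ (so $\gamma'$-sortable elements are $\gamma$-sortable), and both obtain downward closure from the characterization $u\leq_{B}w$ if and only if $\alpha_{\gamma}(u)\subseteq\alpha_{\gamma}(w)$ together with the fact that $\alpha_{\gamma}(w)$ occupies only $J$-positions. The only difference is thoroughness: you spell out the position bijection $\phi$ and the converse inclusion $C_{\gamma}\cap W'\subseteq C_{\gamma'}$, both of which the paper's proof asserts or leaves implicit.
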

\begin{proof}
	Since $W'$ is a subgroup of $W$, every element $w'\in W'$ lies in $W$ as well. Since $W'$ is a standard parabolic subgroup of $W$, we conclude that there exists some $J\subseteq S$ such that $W'$ is generated by $J$. Since $\gamma'$ is the restriction of $\gamma$ to $W'$, we conclude that $\gamma'$ is the subword of $\gamma$ that is obtained by deleting the letters not in $J$. Hence if $w'\in W'$ is $\gamma'$-sortable, then it is also $\gamma$-sortable. It follows immediately if $w\in W$ and $w'\in W'$ satisfy $w\leq_{B}w'$, then we have $w\in W'$. (Otherwise, the $\gamma$-sorting word of $w$ contains a letter not in $J$, which then implies $\alpha_{\gamma}(w)\not\subseteq\alpha_{\gamma}(w')$. This, however, contradicts $w\leq_{B}w'$.)
\end{proof}

\begin{remark}\label{rem:subgroups_intervals}
	In particular, if $W$ is finite, then each standard parabolic subgroup of $W$ induces an interval of $\bruh$.
\end{remark}

Now recall that each Coxeter element $\gamma\in W$ induces an orientation $\Gamma_{\gamma}(W)$ of the Coxeter diagram of $W$ as follows: an edge between $s_{i}$ and $s_{j}$ is oriented 
\raisebox{-.15cm}{\begin{tikzpicture}\small
	\draw(0,0) node(si){$s_{i}$};
	\draw(1,0) node(sj){$s_{j}$};
	\draw(.5,.15) node{\tiny $a$};
	\draw[->,thick](si) -- (.55,0);
	\draw[thick](.55,0) -- (sj);
\end{tikzpicture}}
if and only if $s_{i}$ precedes $s_{j}$ in every reduced word for $\gamma$, see \cite{shi97enumeration}*{Section~1.3}. The next result shows which orientations of a Coxeter diagram induce non-distributive intervals of $\bruh$.

\begin{proposition}\label{prop:forbidden_subgraphs}
	Let $W$ be a Coxeter group, let $\gamma\in W$ be a Coxeter element, and let $\Gamma_{\gamma}(W)$ be the Coxeter diagram of $W$ with the orientation induced by $\gamma$. If $\Gamma_{\gamma}(W)$ contains one of the following induced subgraphs, then $\bruh$ is not distributive:
	\begin{enumerate}[(i)]
		\item \raisebox{-.16cm}{\begin{tikzpicture}\small
				\draw(0,0) node(v1){$s_{i_{1}}$};
				\draw(.75,.2) node{\tiny a};
				\draw(1.5,0) node(v2){$s_{i_{2}}$};
				\draw(2.25,.2) node{\tiny b};
				\draw(3,0) node(v3){$s_{i_{3}}$};
				\draw[thick,->](v2) -- (.7,0);
				\draw[thick](.7,0) -- (v1);
				\draw[thick,->](v2) -- (2.3,0);
				\draw[thick](2.3,0) -- (v3);
			\end{tikzpicture}} for $i_{1},i_{2},i_{3}\in\{1,2,\ldots,n\}$, and $a,b\geq 3$,
		\item \raisebox{-.16cm}{\begin{tikzpicture}\small
				\draw(0,0) node(v1){$s_{i_{1}}$};
				\draw(1.5,0) node(v2){$s_{i_{2}}$};
				\draw(2.25,.2) node{\tiny a};
				\draw(3,0) node(v3){$s_{i_{3}}$};
				\draw[thick,->](v2) -- (.7,0);
				\draw[thick](.7,0) -- (v1);
				\draw[thick,->](v3) -- (2.2,0);
				\draw[thick](2.2,0) -- (v2);
			\end{tikzpicture}} for $i_{1},i_{2},i_{3}\in\{1,2,\ldots,n\}$, and $a\geq 4$,
		\item \raisebox{-.7cm}{\begin{tikzpicture}\small
				\draw(0,0) node(v1){$s_{i_{1}}$};
				\draw(1.5,0) node(v2){$s_{i_{2}}$};
				\draw(3,.5) node(v3){$s_{i_{4}}$};
				\draw(3,-.5) node(v4){$s_{i_{3}}$};
				\draw[thick,->](v1) -- (.8,0);
				\draw[thick](.8,0) -- (v2);
				\draw[thick,->](v3) -- (2.2,.23);
				\draw[thick](2.2,.23) -- (v2);
				\draw[thick,->](v2) -- (2.3,-.26);
				\draw[thick](2.3,-.26) -- (v4);
			\end{tikzpicture}} for $i_{1},i_{2},i_{3},i_{4}\in\{1,2,\ldots,n\}$,
		\item \raisebox{-.7cm}{\begin{tikzpicture}\small
				\draw(0,0) node(v1){$s_{i_{1}}$};
				\draw(1.5,0) node(v2){$s_{i_{2}}$};
				\draw(3,.5) node(v3){$s_{i_{4}}$};
				\draw(3,-.5) node(v4){$s_{i_{3}}$};
				\draw[thick,->](v1) -- (.8,0);
				\draw[thick](.8,0) -- (v2);
				\draw[thick,->](v3) -- (2.2,.23);
				\draw[thick](2.2,.23) -- (v2);
				\draw[thick,->](v4) -- (2.2,-.23);
				\draw[thick](2.2,-.23) -- (v2);
			\end{tikzpicture}} for $i_{1},i_{2},i_{3},i_{4}\in\{1,2,\ldots,n\}$,
		\item \raisebox{-.16cm}{\begin{tikzpicture}\small
				\draw(0,0) node(v1){$s_{i_{1}}$};
				\draw(1,0) node(v2){$s_{i_{2}}$};
				\draw(1.5,.2) node{\tiny a};
				\draw(2,0) node(v3){$s_{i_{3}}$};
				\draw(3,0) node(v4){$s_{i_{4}}$};
				\draw[thick,->](v1) -- (.55,0);
				\draw[thick](.55,0) -- (v2);
				\draw[thick,->](v2) -- (1.55,0);
				\draw[thick](1.55,0) -- (v3);
				\draw[thick,->](v4) -- (2.45,0);
				\draw[thick](2.45,0) -- (v3);
			\end{tikzpicture}} for $i_{1},i_{2},i_{3},i_{4}\in\{1,2,\ldots,n\}$, and $a\geq 4$,
		\item \raisebox{-.16cm}{\begin{tikzpicture}\small
				\draw(0,0) node(v1){$s_{i_{1}}$};
				\draw(1,0) node(v2){$s_{i_{2}}$};
				\draw(2,0) node(v3){$s_{i_{3}}$};
				\draw(2.5,.2) node{\tiny a};
				\draw(3,0) node(v4){$s_{i_{4}}$};
				\draw[thick,->](v1) -- (.55,0);
				\draw[thick](.55,0) -- (v2);
				\draw[thick,->](v2) -- (1.55,0);
				\draw[thick](1.55,0) -- (v3);
				\draw[thick,->](v3) -- (2.55,0);
				\draw[thick](2.55,0) -- (v4);
			\end{tikzpicture}} for $i_{1},i_{2},i_{3},i_{4}\in\{1,2,\ldots,n\}$, and $a\geq 5$, \quad or
		\item \raisebox{-.16cm}{\begin{tikzpicture}\small
				\draw(0,0) node(v1){$s_{i_{1}}$};
				\draw(1,0) node(v2){$s_{i_{2}}$};
				\draw(2,0) node(v3){$s_{i_{3}}$};
				\draw(2.5,.2) node{\tiny a};
				\draw(3,0) node(v4){$s_{i_{4}}$};
				\draw[thick,->](v1) -- (.55,0);
				\draw[thick](.55,0) -- (v2);
				\draw[thick,->](v2) -- (1.55,0);
				\draw[thick](1.55,0) -- (v3);
				\draw[thick,->](v4) -- (2.45,0);
				\draw[thick](2.45,0) -- (v3);
			\end{tikzpicture}} for $i_{1},i_{2},i_{3},i_{4}\in\{1,2,\ldots,n\}$, and $a\geq 5$.
	\end{enumerate}
\end{proposition}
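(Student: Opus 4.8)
The plan is to reduce the statement to a finite check on small parabolic subgroups and then to locate a pentagon in each case. First I would observe that every configuration (i)--(vii) is an induced subgraph of $\Gamma_{\gamma}(W)$ supported on a set $J\subseteq S$ of three or four generators. By Lemma~\ref{lem:subgroups_ideals} the standard parabolic subgroup $W'=\langle J\rangle$, together with the restriction $\gamma'$ of $\gamma$ to $W'$, contributes the order ideal $\mathcal{B}_{\gamma'}$ to $\bruh$. I would then upgrade ``order ideal'' to ``sublattice'': meets are inherited because $\mathcal{B}_{\gamma'}$ is downward closed, while the join of two elements $u',v'\in\mathcal{C}_{\gamma'}$, being determined by $\alpha_{\gamma}(u')\cup\alpha_{\gamma}(v')$, is supported on letters of $J$ and hence again lies in $\mathcal{B}_{\gamma'}$. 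Consequently the meet and join of $\mathcal{B}_{\gamma'}$ agree with those of $\bruh$, and since a sublattice of a distributive lattice is distributive, it suffices to prove that $\mathcal{B}_{\gamma'}$ is \emph{not} distributive for each of the seven configurations.

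The second ingredient is the observation that non-distributivity of these small lattices can always be witnessed by a pentagon. By Theorem~\ref{thm:bruhat_join_semidistributive} the lattice $\bruh$, and hence each $\mathcal{B}_{\gamma'}$, is join-distributive and in particular meet-semidistributive; therefore it contains no copy of the diamond $M_{3}$, which already fails meet-semidistributivity on its three atoms. By the classical characterization that a lattice is distributive if and only if it contains neither $N_{5}$ nor $M_{3}$ as a sublattice, it follows that $\mathcal{B}_{\gamma'}$ is non-distributive exactly when it contains a sublattice isomorphic to the pentagon $N_{5}$. Thus for each configuration my task reduces to exhibiting five $\gamma'$-sortable elements that close up to an $N_{5}$.

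To carry this out I would use the position model established in the proof of Theorem~\ref{thm:bruhat_lattice}: a $\gamma'$-sortable element $w$ is recorded by its filled-position set $\alpha_{\gamma'}(w)$, the order is $u\leq_{B}w$ if and only if $\alpha_{\gamma'}(u)\subseteq\alpha_{\gamma'}(w)$, and joins are unions of filled-position sets. For each orientation I would read off $\gamma'$ from the rule that $s_{i}$ precedes $s_{j}$ precisely when the edge points from $s_{i}$ to $s_{j}$, enumerate the relevant $\gamma'$-sorting words together with their filled positions, and point to an explicit pentagon. I would treat the three-vertex cases (i) and (ii) first, where $W'$ has rank three and the pentagon already appears among elements of small length, and then the four-vertex cases (iii)--(vii). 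Here I would exploit the fact that enlarging an edge label $a$ only enlarges $W'$ without altering the relevant low subwords, so that it is enough to verify each case for the smallest admissible value of $a$.

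The main obstacle is the explicit, orientation-dependent bookkeeping: one must correctly generate the $\gamma'$-sortable elements (respecting the nested-block condition $b_{1}\supseteq b_{2}\supseteq\cdots$), compute their filled positions rather than rely on length alone, and verify that the five chosen elements are genuinely closed under the ambient meet and join, so that they form an honest $N_{5}$ \emph{sublattice} and not merely a subposet shaped like a pentagon. Since sortability is sensitive to the chosen orientation, the witnessing pentagons differ from case to case and there is no single uniform witness; the argument is therefore a careful finite verification across the seven configurations, kept short by the reduction to minimal labels above.
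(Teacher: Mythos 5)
Your proposal sets up two correct reductions but stops exactly where the proof has to begin. The parabolic-sublattice step (meets and joins of elements of $\mathcal{B}_{\gamma'}$ computed in $\bruh$ stay in $\mathcal{B}_{\gamma'}$, so non-distributivity of $\mathcal{B}_{\gamma'}$ forces non-distributivity of $\bruh$) is sound, and so is the observation that meet-semidistributivity rules out $M_{3}$, so that by Birkhoff's criterion non-distributivity is equivalent to containing an $N_{5}$. But these are preamble: the entire mathematical content of the proposition is the case-by-case exhibition of witnesses for the seven configurations, and your proposal defers precisely this (``I would enumerate the relevant $\gamma'$-sortable elements \ldots and point to an explicit pentagon''). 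Nothing is actually verified for any of (i)--(vii), so no configuration is shown to be non-distributive. The paper's proof consists of nothing but these seven verifications: for each configuration it names three explicit $\gamma$-sortable elements $x,y,z$ --- for instance, in case (i), $x=s_{i_{2}}s_{i_{1}}\vert s_{i_{2}}$, $y=s_{i_{2}}s_{i_{1}}s_{i_{3}}$, $z=s_{i_{2}}s_{i_{3}}\vert s_{i_{2}}$ --- and computes both sides of $x\wedge_{B}(y\vee_{B}z)=(x\wedge_{B}y)\vee_{B}(x\wedge_{B}z)$ directly in $\bruh$, showing they differ. Note that a single violating triple suffices, so neither the $N_{5}$ detour nor the sublattice reduction is actually needed; your extra machinery does not reduce the work of finding and checking the witnesses, which is the same bookkeeping either way.

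A second, subtler gap is your claim that ``it is enough to verify each case for the smallest admissible value of $a$.'' Joins are insensitive to $a$, since they are unions of filled-position sets, but meets are not: the meet of two sortable elements is the largest sortable element whose position set lies below both, and increasing a bond label $m_{i,j}$ removes relations, so words that were non-reduced (hence not sorting words) can become reduced, i.e., new sortable elements can appear below a given pair and enlarge their meet. A pentagon certified at the minimal label could therefore fail to be closed under meets at a larger label, so this step needs an argument. The paper avoids the issue by choosing witnesses whose defining words and all position-set computations are literally the same for every admissible value of $a$ (e.g., in case (ii) the element $z=s_{i_{3}}s_{i_{2}}\vert s_{i_{3}}s_{i_{2}}$ is reduced and sortable for every $a\geq 4$, and the meet computations involve only position sets that are independent of $a$), so that the violation of the distributive law is uniform in $a$ rather than inferred from the smallest case.
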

\begin{proof}
	Suppose that $\Gamma_{\gamma}(W)$ contains an induced subgraph of form (i). Then, in particular, we have the $\gamma$-sortable elements $x=s_{i_{2}}s_{i_{1}}\vert s_{i_{2}},y=s_{i_{2}}s_{i_{1}}s_{i_{3}}$, and $z=s_{i_{2}}s_{i_{3}}\vert s_{i_{2}}$. We have
	\begin{align*}
		x\wedge_{B}(y\vee_{B}z) & = x\wedge_{B}s_{i_{2}}s_{i_{1}}s_{i_{3}}\vert s_{i_{2}}\\
		& = s_{i_{2}}s_{i_{1}}\vert s_{i_{2}},\qquad\text{and}\\
		(x\wedge_{B}y)\vee_{B}(x\wedge_{B}z) & = s_{i_{2}}s_{i_{1}} \vee_{B} s_{i_{2}}\\
		& = s_{i_{2}}s_{i_{1}},
	\end{align*}
	which contradicts \eqref{eq:meet_distributivity}. 

	If $\Gamma_{\gamma}(W)$ contains an induced subgraph of form (ii), then consider the elements $x=s_{i_{2}}s_{i_{1}}\vert s_{i_{2}},y=s_{i_{3}}s_{i_{2}}s_{i_{1}}$, and $z=s_{i_{3}}s_{i_{2}}\vert s_{i_{3}}s_{i_{2}}$. We have 
	\begin{align*}
		x\wedge_{B}(y\vee_{B}z) & = x\wedge_{B}s_{i_{3}}s_{i_{2}}s_{i_{1}}\vert s_{i_{3}}s_{i_{2}}\\
		& = s_{i_{2}}s_{i_{1}}\vert s_{i_{2}},\qquad\text{and}\\
		(x\wedge_{B}y)\vee_{B}(x\wedge_{B}z) & = s_{i_{2}}s_{i_{1}}\vee_{B}s_{i_{2}}\\
		& = s_{i_{2}}s_{i_{1}},
	\end{align*}
	which contradicts \eqref{eq:meet_distributivity}.

	If $\Gamma_{\gamma}(W)$ contains an induced subgraph of the form (iii), then consider the elements $x=s_{i_{4}}s_{i_{2}}s_{i_{3}}\vert s_{i_{2}},y=s_{i_{1}}s_{i_{4}}s_{i_{2}}s_{i_{3}}$, and $z=s_{i_{1}}s_{i_{4}}s_{i_{2}}\vert s_{i_{1}}s_{i_{4}}s_{i_{2}}$. We have
	\begin{align*}
		x\wedge_{B}(y\vee_{B}z) & = x\wedge_{B}s_{i_{1}}s_{i_{4}}s_{i_{2}}s_{i_{3}}\vert s_{i_{1}}s_{i_{4}}s_{i_{2}}\\
		& = s_{i_{4}}s_{i_{2}}s_{i_{3}}\vert s_{i_{2}},\qquad\text{and}\\
		(x\wedge_{B}y)\vee_{B}(x\wedge_{B}z) & = s_{i_{4}}s_{i_{2}}s_{i_{3}}\vee_{B}s_{i_{4}}s_{i_{2}}\\
		& = s_{i_{4}}s_{i_{2}}s_{i_{3}},
	\end{align*}
	which contradicts \eqref{eq:meet_distributivity}.

	If $\Gamma_{\gamma}(W)$ contains an induced subgraph of the form (iv), then consider the elements $x=s_{i_{1}}s_{i_{3}}s_{i_{2}}\vert s_{i_{1}}s_{i_{3}}s_{i_{2}}, y=s_{i_{1}}s_{i_{4}}s_{i_{2}}\vert s_{i_{1}}s_{i_{4}}s_{i_{2}}$, and $z=s_{i_{3}}s_{i_{4}}s_{i_{2}}\vert s_{i_{3}}s_{i_{4}}s_{i_{2}}$. We have
	\begin{align*}
		x\wedge_{B}(y\vee_{B}z) & = x\wedge_{B}s_{i_{1}}s_{i_{3}}s_{i_{4}}s_{i_{2}}\vert s_{i_{1}}s_{i_{3}}s_{i_{4}}s_{i_{2}}\\
		& = s_{i_{1}}s_{i_{3}}s_{i_{2}}\vert s_{i_{1}}s_{i_{3}}s_{i_{2}},\qquad\text{and}\\
		(x\wedge_{B}y)\vee_{B}(x\wedge_{B}z) & = s_{i_{1}}s_{i_{2}}\vert s_{i_{1}}\vee_{B}s_{i_{3}}s_{i_{2}}\vert s_{i_{3}}\\
		& = s_{i_{1}}s_{i_{3}}s_{i_{2}}\vert s_{i_{1}}s_{i_{3}},
	\end{align*}
	which contradicts \eqref{eq:meet_distributivity}.
	
	If $\Gamma_{\gamma}(W)$ contains an induced subgraph of the form (v), then consider the elements $x=s_{i_{2}}s_{i_{4}}s_{i_{3}}\vert s_{i_{2}}s_{i_{4}}s_{i_{3}}\vert s_{i_{2}}, y=s_{i_{4}}s_{i_{3}}\vert s_{i_{4}}$, and $z=s_{i_{1}}s_{i_{2}}s_{i_{3}}\vert s_{i_{1}}s_{i_{2}}s_{i_{3}}\vert s_{i_{1}}s_{i_{2}}$. We have
	\begin{align*}
		x\wedge_{B}(y\vee_{B}z) & = x\wedge_{B}s_{i_{1}}s_{i_{2}}s_{i_{4}}s_{i_{3}}\vert s_{i_{1}}s_{i_{2}}s_{i_{4}}s_{i_{3}}\vert s_{i_{1}}s_{i_{2}}\\
		& = s_{i_{2}}s_{i_{4}}s_{i_{3}}\vert s_{i_{2}}s_{i_{4}}s_{i_{3}}\vert s_{i_{2}},\qquad\text{and}\\
		(x\wedge_{B}y)\vee_{B}(x\wedge_{B}z) & = s_{i_{4}}s_{i_{3}}\vert s_{i_{4}}\vee_{B}s_{i_{2}}s_{i_{3}}\vert s_{i_{2}}s_{i_{3}}\\
		& = s_{i_{2}}s_{i_{4}}s_{i_{3}}\vert s_{i_{2}}s_{i_{4}}s_{i_{3}},
	\end{align*}
	which contradicts \eqref{eq:meet_distributivity}.
	
	If $\Gamma_{\gamma}(W)$ contains an induced subgraph of the form (vi), then consider the elements $x=s_{i_{2}}s_{i_{3}}s_{i_{4}}\vert s_{i_{2}}s_{i_{3}}s_{i_{4}}\vert s_{i_{2}}s_{i_{3}}s_{i_{4}}\vert s_{i_{2}}s_{i_{3}}s_{i_{4}}\vert s_{i_{2}}s_{i_{3}}, y=s_{i_{2}}s_{i_{3}}s_{i_{4}}\vert s_{i_{2}}s_{i_{3}}s_{i_{4}}\vert s_{i_{2}}s_{i_{3}}s_{i_{4}}\vert s_{i_{2}}s_{i_{3}}\vert s_{i_{2}}$, and $z=s_{i_{1}}s_{i_{2}}s_{i_{3}}s_{i_{4}}\vert s_{i_{1}}s_{i_{2}}s_{i_{3}}s_{i_{4}}\vert s_{i_{1}}s_{i_{2}}s_{i_{3}}s_{i_{4}}\vert s_{i_{1}}s_{i_{2}}s_{i_{3}}s_{i_{4}}\vert s_{i_{3}}$. We have
	\begin{align*}
		x\wedge_{B}(y\vee_{B}z) & = x\wedge_{B}s_{i_{1}}s_{i_{2}}s_{i_{3}}s_{i_{4}}\vert s_{i_{1}}s_{i_{2}}s_{i_{3}}s_{i_{4}}\vert s_{i_{1}}s_{i_{2}}s_{i_{3}}s_{i_{4}}\vert s_{i_{1}}s_{i_{2}}s_{i_{3}}s_{i_{4}}\vert s_{i_{2}}s_{i_{3}}\\
		& = s_{i_{2}}s_{i_{3}}s_{i_{4}}\vert s_{i_{2}}s_{i_{3}}s_{i_{4}}\vert s_{i_{2}}s_{i_{3}}s_{i_{4}}\vert s_{i_{2}}s_{i_{3}}s_{i_{4}}\vert s_{i_{2}}s_{i_{3}},\qquad\text{and}\\
		(x\wedge_{B}y)\vee_{B}(x\wedge_{B}z) & = s_{i_{2}}s_{i_{3}}s_{i_{4}}\vert s_{i_{2}}s_{i_{3}}s_{i_{4}}\vert s_{i_{2}}s_{i_{3}}s_{i_{4}}\vert s_{i_{2}}s_{i_{3}}\vert s_{i_{2}}\\
		& \kern1cm \vee_{B}s_{i_{2}}s_{i_{3}}s_{i_{4}}\vert s_{i_{2}}s_{i_{3}}s_{i_{4}}\vert s_{i_{2}}s_{i_{3}}s_{i_{4}}\vert s_{i_{2}}s_{i_{3}}s_{i_{4}}\\
		& = s_{i_{2}}s_{i_{3}}s_{i_{4}}\vert s_{i_{2}}s_{i_{3}}s_{i_{4}}\vert s_{i_{2}}s_{i_{3}}s_{i_{4}}\vert s_{i_{2}}s_{i_{3}}s_{i_{4}}\vert s_{i_{2}},
	\end{align*}
	which contradicts \eqref{eq:meet_distributivity}.
	
	If $\Gamma_{\gamma}(W)$ contains an induced subgraph of the form (vii), then consider the elements $x=s_{i_{2}}s_{i_{4}}s_{i_{3}}\vert s_{i_{2}}s_{i_{4}}s_{i_{3}}\vert s_{i_{2}}s_{i_{4}}s_{i_{3}}\vert s_{i_{2}}s_{i_{4}}s_{i_{3}}\vert s_{i_{2}}s_{i_{4}}s_{i_{3}},y=s_{i_{2}}s_{i_{4}}s_{i_{3}}\vert s_{i_{2}}s_{i_{4}}s_{i_{3}}\vert s_{i_{2}}s_{i_{4}}s_{i_{3}}\vert s_{i_{2}}s_{i_{4}}s_{i_{3}}\vert s_{i_{2}}$, and $z=s_{i_{1}}s_{i_{2}}s_{i_{4}}s_{i_{3}}\vert s_{i_{1}}s_{i_{2}}s_{i_{4}}s_{i_{3}}\vert s_{i_{1}}s_{i_{2}}s_{i_{4}}s_{i_{3}}\vert s_{i_{1}}s_{i_{2}}s_{i_{4}}s_{i_{3}}\vert s_{i_{4}}s_{i_{3}}$. We
	have
	\begin{align*}
		x\wedge_{B}(y\vee_{B}z) & = x\wedge_{B}s_{i_{1}}s_{i_{2}}s_{i_{4}}s_{i_{3}}\vert s_{i_{1}}s_{i_{2}}s_{i_{4}}s_{i_{3}}\vert s_{i_{1}}s_{i_{2}}s_{i_{4}}s_{i_{3}}\vert s_{i_{1}}s_{i_{2}}s_{i_{4}}s_{i_{3}}\vert s_{i_{2}}s_{i_{4}}s_{i_{3}}\\
		& = s_{i_{2}}s_{i_{4}}s_{i_{3}}\vert s_{i_{2}}s_{i_{4}}s_{i_{3}}\vert s_{i_{2}}s_{i_{4}}s_{i_{3}}\vert s_{i_{2}}s_{i_{4}}s_{i_{3}}\vert s_{i_{2}}s_{i_{4}}s_{i_{3}},\qquad\text{and}\\
		(x\wedge_{B}y)\vee_{B}(x\wedge_{B}z) & = s_{i_{2}}s_{i_{4}}s_{i_{3}}\vert s_{i_{2}}s_{i_{4}}s_{i_{3}}\vert s_{i_{2}}s_{i_{4}}s_{i_{3}}\vert s_{i_{2}}s_{i_{4}}s_{i_{3}}\vert s_{i_{2}}\\
		& \kern1cm \vee_{B} s_{i_{2}}s_{i_{4}}s_{i_{3}}\vert s_{i_{2}}s_{i_{4}}s_{i_{3}}\vert s_{i_{2}}s_{i_{4}}s_{i_{3}}\vert s_{i_{2}}s_{i_{4}}s_{i_{3}}\vert s_{i_{4}}\\
		& = s_{i_{2}}s_{i_{4}}s_{i_{3}}\vert s_{i_{2}}s_{i_{4}}s_{i_{3}}\vert s_{i_{2}}s_{i_{4}}s_{i_{3}}\vert s_{i_{2}}s_{i_{4}}s_{i_{3}}\vert s_{i_{2}}s_{i_{4}},
	\end{align*}
	which contradicts \eqref{eq:meet_distributivity}.
\end{proof}
 
We obtain the following corollary immediately.

\begin{corollary}\label{cor:type_d_e}
	If $W=D_{n}$, for $n\geq 4$, $W=E_{n}$, for $n\in\{6,7,8\}$, $W=F_{4}$, or $W=H_{4}$, and $\gamma\in W$ is a Coxeter element, then $\bruh$ is not distributive.
\end{corollary}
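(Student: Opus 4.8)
The plan is to deduce the corollary directly from Proposition~\ref{prop:forbidden_subgraphs}: it suffices to show that for each of the listed groups $W$, \emph{every} orientation of the Coxeter diagram $\Gamma(W)$ contains one of the induced subgraphs (i)--(vii). Since every Coxeter element $\gamma\in W$ induces such an orientation $\Gamma_{\gamma}(W)$, this will show that $\bruh$ is never distributive. As all the diagrams in question are trees, and as being an induced subgraph is a purely local condition on the diagram, the task reduces to a finite inspection of edge orientations.

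First I would treat the simply-laced types $D_{n}$ and $E_{n}$ together, exploiting the fact that each of their diagrams possesses a trivalent ``branch'' vertex $c$ (namely $s_{n-2}$ in $D_{n}$ and $s_{3}$ in $E_{n}$) whose three neighbours are pairwise non-adjacent, so that $c$ together with its neighbours spans an induced claw. For \emph{any} orientation of the three edges at $c$ one of the following occurs: if at least two edges point out of $c$, then $c$ together with two of its out-neighbours spans an induced subgraph of type (i); if exactly one edge points out of $c$, we obtain type (iii); and if all three edges point into $c$, we obtain type (iv). All edges here carry the label $3$, so the label hypotheses in (i), (iii), (iv) are automatically met, and $D_{n}$ and $E_{n}$ are handled uniformly, independently of $\gamma$.

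For $F_{4}$ and $H_{4}$, whose diagrams are the respective $4$-vertex paths $s_{1}-s_{2}-s_{3}-s_{4}$ with the label $4$ on the edge $s_{2}-s_{3}$, and with the label $5$ on the edge $s_{3}-s_{4}$, I would argue by a short case analysis over the orientations of the three path edges. For $F_{4}$ the diagram automorphism $s_{1}\leftrightarrow s_{4}$, $s_{2}\leftrightarrow s_{3}$ (which fixes the label-$4$ edge) halves the work; in each case either some internal vertex is a source, giving type (i), or the orientation runs through the label-$4$ edge $s_{2}-s_{3}$, in which case one finds type (ii) or type (v). For $H_{4}$ there is no such symmetry, so I would check all eight orientations directly: whenever $s_{2}$ or $s_{3}$ is a source we obtain type (i); the four remaining orientations are exactly $s_{1}\to s_{2}\to s_{3}\to s_{4}$ (type (vi)), $s_{1}\to s_{2}\to s_{3}\leftarrow s_{4}$ (type (vii)), and the two orientations containing the directed subpath $s_{4}\to s_{3}\to s_{2}$, which yield type (ii) through the label-$5$ edge.

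The only routine labour is this enumeration for $F_{4}$ and $H_{4}$, and I do not anticipate a genuine obstacle. The sole point requiring a little care is the bookkeeping of \emph{which} edge carries the high label, so that a label-sensitive subgraph (ii), (v), (vi), or (vii) is invoked only when its label hypothesis ($a\geq 4$ or $a\geq 5$) is actually satisfied, together with the verification that the chosen vertices span an \emph{induced} subgraph of the diagram. Since in a path or a claw the relevant non-adjacencies are immediate, this verification is trivial in each instance, and completeness of the case check is all that is needed.
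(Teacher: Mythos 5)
Your proposal is correct, and all the delicate points check out: the label bookkeeping is right (cases (i), (iii), (iv) only require labels $\geq 3$; your invocations of (ii) and (v) in $F_{4}$ have $a=4$; your invocations of (ii), (vi), (vii) in $H_{4}$ have $a=5$), and the induced-subgraph verifications are immediate because the diagrams are trees. The difference with the paper lies in how the simply-laced types are handled. The paper enumerates the eight orientations of $\Gamma(D_{4})$ explicitly, matching each to case (i), (iii) or (iv) of Proposition~\ref{prop:forbidden_subgraphs}, and then treats $D_{n}$ for $n>4$ and $E_{n}$ by a reduction: these groups contain a standard parabolic subgroup of type $D_{4}$, so by Lemma~\ref{lem:subgroups_ideals} and Remark~\ref{rem:subgroups_intervals} the lattice $\bruh$ contains a non-distributive interval and hence cannot be distributive. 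You instead apply Proposition~\ref{prop:forbidden_subgraphs} directly to $W$ itself: the trivalent vertex of $\Gamma(D_{n})$ or $\Gamma(E_{n})$ spans an induced claw, and classifying orientations by the out-degree ($3$, $2$, $1$ or $0$) of that vertex yields case (i), (i), (iii) or (iv) respectively. This is more uniform and more self-contained --- it needs neither the $D_{4}$ enumeration nor the parabolic-subgroup machinery of Lemma~\ref{lem:subgroups_ideals} and Remark~\ref{rem:subgroups_intervals}, i.e., no appeal to the fact that a non-distributive interval obstructs distributivity --- at the modest cost of not exhibiting the offending orientations explicitly. For $F_{4}$ and $H_{4}$ your treatment coincides in substance with the paper's eight-orientation check, merely reorganized by whether an internal vertex is a source and, for $F_{4}$, compressed by the diagram automorphism, which is legitimate since the list of forbidden configurations is stable under relabeling of the generators.
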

\begin{proof}
	First consider $W=D_{4}$. The eight orientations of $\Gamma(D_{4})$ are shown below.
	\begin{center}\begin{tabular}{cccc}
		\begin{tikzpicture}\small
			\draw(0,0) node(v1){$s_{1}$};
			\draw(1,0) node[gray](v2){$s_{2}$};
			\draw(2,.5) node[gray](v3){$s_{3}$};
			\draw(2,-.5) node[gray](v4){$s_{4}$};
			\draw[thick,->](v1) -- (.55,0);
			\draw[thick](.55,0) -- (v2);
			\draw[thick,->,gray](v2) -- (1.55,.27);
			\draw[thick,gray](1.55,.27) -- (v3);
			\draw[thick,->,gray](v2) -- (1.55,-.27);
			\draw[thick,gray](1.55,-.27) -- (v4);
		\end{tikzpicture}
		& 
		\begin{tikzpicture}\small
			\draw(0,0) node[gray](v1){$s_{1}$};
			\draw(1,0) node[gray](v2){$s_{2}$};
			\draw(2,.5) node[gray](v3){$s_{3}$};
			\draw(2,-.5) node(v4){$s_{4}$};
			\draw[thick,->,gray](v2) -- (.45,0);
			\draw[thick,gray](.45,0) -- (v1);
			\draw[thick,->,gray](v2) -- (1.55,.27);
			\draw[thick,gray](1.55,.27) -- (v3);
			\draw[thick,->](v2) -- (1.55,-.27);
			\draw[thick](1.55,-.27) -- (v4);
		\end{tikzpicture}
		& 
		\begin{tikzpicture}\small
			\draw(0,0) node[gray](v1){$s_{1}$};
			\draw(1,0) node[gray](v2){$s_{2}$};
			\draw(2,.5) node(v3){$s_{3}$};
			\draw(2,-.5) node[gray](v4){$s_{4}$};
			\draw[thick,->,gray](v2) -- (.45,0);
			\draw[thick,gray](.45,0) -- (v1);
			\draw[thick,->](v3) -- (1.45,.23);
			\draw[thick](1.45,.23) -- (v2);
			\draw[thick,->,gray](v2) -- (1.55,-.27);
			\draw[thick,gray](1.55,-.27) -- (v4);
		\end{tikzpicture}
		&
		\begin{tikzpicture}\small
			\draw(0,0) node[gray](v1){$s_{1}$};
			\draw(1,0) node[gray](v2){$s_{2}$};
			\draw(2,.5) node[gray](v3){$s_{3}$};
			\draw(2,-.5) node(v4){$s_{4}$};
			\draw[thick,->,gray](v2) -- (.45,0);
			\draw[thick,gray](.45,0) -- (v1);
			\draw[thick,->,gray](v2) -- (1.55,.27);
			\draw[thick,gray](1.55,.27) -- (v3);
			\draw[thick,->](v4) -- (1.45,-.23);
			\draw[thick](1.45,-.23) -- (v2);
		\end{tikzpicture}
		\\
		\begin{tikzpicture}\small
			\draw(0,0) node(v1){$s_{1}$};
			\draw(1,0) node(v2){$s_{2}$};
			\draw(2,.5) node(v3){$s_{3}$};
			\draw(2,-.5) node(v4){$s_{4}$};
			\draw[thick,->](v1) -- (.55,0);
			\draw[thick](.55,0) -- (v2);
			\draw[thick,->](v3) -- (1.45,.23);
			\draw[thick](1.45,.23) -- (v2);
			\draw[thick,->](v2) -- (1.55,-.27);
			\draw[thick](1.55,-.27) -- (v4);
		\end{tikzpicture}
		&
		\begin{tikzpicture}\small
			\draw(0,0) node(v1){$s_{1}$};
			\draw(1,0) node(v2){$s_{2}$};
			\draw(2,.5) node(v3){$s_{3}$};
			\draw(2,-.5) node(v4){$s_{4}$};
			\draw[thick,->](v1) -- (.55,0);
			\draw[thick](.55,0) -- (v2);
			\draw[thick,->](v2) -- (1.55,.27);
			\draw[thick](1.55,.27) -- (v3);
			\draw[thick,->](v4) -- (1.45,-.23);
			\draw[thick](1.45,-.23) -- (v2);
		\end{tikzpicture}
		&
		\begin{tikzpicture}\small
			\draw(0,0) node(v1){$s_{1}$};
			\draw(1,0) node(v2){$s_{2}$};
			\draw(2,.5) node(v3){$s_{3}$};
			\draw(2,-.5) node(v4){$s_{4}$};
			\draw[thick,->](v2) -- (.45,0);
			\draw[thick](.45,0) -- (v1);
			\draw[thick,->](v3) -- (1.45,.23);
			\draw[thick](1.45,.23) -- (v2);
			\draw[thick,->](v4) -- (1.45,-.23);
			\draw[thick](1.45,-.23) -- (v2);
		\end{tikzpicture}
		& 
		\begin{tikzpicture}\small
			\draw(0,0) node(v1){$s_{1}$};
			\draw(1,0) node(v2){$s_{2}$};
			\draw(2,.5) node(v3){$s_{3}$};
			\draw(2,-.5) node(v4){$s_{4}$};
			\draw[thick,->](v1) -- (.55,0);
			\draw[thick](.55,0) -- (v2);
			\draw[thick,->](v3) -- (1.45,.23);
			\draw[thick](1.45,.23) -- (v2);
			\draw[thick,->](v4) -- (1.45,-.23);
			\draw[thick](1.45,-.23) -- (v2);
		\end{tikzpicture}\\
	\end{tabular}\end{center}
	
	The first four orientations in the first row correspond to case (i) in Proposition~\ref{prop:forbidden_subgraphs}, the first three orientations in the second row correspond to case (iii) in Proposition~\ref{prop:forbidden_subgraphs}, and the fourth orientation in the second row corresponds to case (iv) in Proposition~\ref{prop:forbidden_subgraphs}. Hence $\bruh$ cannot be distributive for Coxeter elements inducing these orientations.
	
	If $W=D_{n}$, for $n>4$, or $W=E_{n}$, for $n\in\{6,7,8\}$, then we conclude from Figure~\ref{fig:coxeter_diagrams} that $W$ has a standard parabolic subgroup isomorphic to $D_{4}$. In view of Lemma~\ref{lem:subgroups_ideals} and Remark~\ref{rem:subgroups_intervals}, we conclude that $\bruh$ contains a non-distributive interval, and hence cannot be distributive itself.
	
	Now let $W=F_{4}$. The eight orientations of $\Gamma(F_{4})$ are shown below.
	\begin{center}\begin{tabular}{ccc}
		\begin{tikzpicture}\small
			\draw(0,0) node(v1){$s_{1}$};
			\draw(1,0) node[gray](v2){$s_{2}$};
			\draw(1.5,.2) node{\tiny 4};
			\draw(2,0) node[gray](v3){$s_{3}$};
			\draw(3,0) node[gray](v4){$s_{4}$};
			\draw[thick,->](v1) -- (.55,0);
			\draw[thick](.55,0) -- (v2);
			\draw[thick,->,gray](v3) -- (1.45,0);
			\draw[thick,gray](1.45,0) -- (v2);
			\draw[thick,->,gray](v3) -- (2.55,0);
			\draw[thick,gray](2.55,0) -- (v4);
		\end{tikzpicture}
		&
		\begin{tikzpicture}\small
			\draw(0,0) node[gray](v1){$s_{1}$};
			\draw(1,0) node[gray](v2){$s_{2}$};
			\draw(1.5,.2) node{\tiny 4};
			\draw(2,0) node[gray](v3){$s_{3}$};
			\draw(3,0) node(v4){$s_{4}$};
			\draw[thick,->,gray](v2) -- (.45,0);
			\draw[thick,gray](.45,0) -- (v1);
			\draw[thick,->,gray](v2) -- (1.55,0);
			\draw[thick,gray](1.55,0) -- (v3);
			\draw[thick,->](v3) -- (2.55,0);
			\draw[thick](2.55,0) -- (v4);
		\end{tikzpicture}
		&
		\begin{tikzpicture}\small
			\draw(0,0) node[gray](v1){$s_{1}$};
			\draw(1,0) node[gray](v2){$s_{2}$};
			\draw(1.5,.2) node{\tiny 4};
			\draw(2,0) node[gray](v3){$s_{3}$};
			\draw(3,0) node(v4){$s_{4}$};
			\draw[thick,->,gray](v2) -- (.45,0);
			\draw[thick,gray](.45,0) -- (v1);
			\draw[thick,->,gray](v2) -- (1.55,0);
			\draw[thick,gray](1.55,0) -- (v3);
			\draw[thick,->](v4) -- (2.45,0);
			\draw[thick](2.45,0) -- (v3);
		\end{tikzpicture}
		\\
		\begin{tikzpicture}\small
			\draw(0,0) node(v1){$s_{1}$};
			\draw(1,0) node[gray](v2){$s_{2}$};
			\draw(1.5,.2) node{\tiny 4};
			\draw(2,0) node[gray](v3){$s_{3}$};
			\draw(3,0) node[gray](v4){$s_{4}$};
			\draw[thick,->](v2) -- (.45,0);
			\draw[thick](.45,0) -- (v1);
			\draw[thick,->,gray](v3) -- (1.45,0);
			\draw[thick,gray](1.45,0) -- (v2);
			\draw[thick,->,gray](v3) -- (2.55,0);
			\draw[thick,gray](2.55,0) -- (v4);
		\end{tikzpicture}
		&
		\begin{tikzpicture}\small
			\draw(0,0) node[gray](v1){$s_{1}$};
			\draw(1,0) node[gray](v2){$s_{2}$};
			\draw(1.5,.2) node{\tiny 4};
			\draw(2,0) node[gray](v3){$s_{3}$};
			\draw(3,0) node(v4){$s_{4}$};
			\draw[thick,->,gray](v2) -- (.45,0);
			\draw[thick,gray](.45,0) -- (v1);
			\draw[thick,->,gray](v3) -- (1.45,0);
			\draw[thick,gray](1.45,0) -- (v2);
			\draw[thick,->](v4) -- (2.45,0);
			\draw[thick](2.45,0) -- (v3);
		\end{tikzpicture}
		&
		\begin{tikzpicture}\small
			\draw(0,0) node(v1){$s_{1}$};
			\draw(1,0) node[gray](v2){$s_{2}$};
			\draw(1.5,.2) node{\tiny 4};
			\draw(2,0) node[gray](v3){$s_{3}$};
			\draw(3,0) node[gray](v4){$s_{4}$};
			\draw[thick,->](v1) -- (.55,0);
			\draw[thick](.55,0) -- (v2);
			\draw[thick,->,gray](v2) -- (1.55,0);
			\draw[thick,gray](1.55,0) -- (v3);
			\draw[thick,->,gray](v3) -- (2.55,0);
			\draw[thick,gray](2.55,0) -- (v4);
		\end{tikzpicture}
		\\
		\begin{tikzpicture}\small
			\draw(0,0) node(v1){$s_{1}$};
			\draw(1,0) node(v2){$s_{2}$};
			\draw(1.5,.2) node{\tiny 4};
			\draw(2,0) node(v3){$s_{3}$};
			\draw(3,0) node(v4){$s_{4}$};
			\draw[thick,->](v1) -- (.55,0);
			\draw[thick](.55,0) -- (v2);
			\draw[thick,->](v2) -- (1.55,0);
			\draw[thick](1.55,0) -- (v3);
			\draw[thick,->](v4) -- (2.45,0);
			\draw[thick](2.45,0) -- (v3);
		\end{tikzpicture}
		&
		\begin{tikzpicture}\small
			\draw(0,0) node(v1){$s_{1}$};
			\draw(1,0) node(v2){$s_{2}$};
			\draw(1.5,.2) node{\tiny 4};
			\draw(2,0) node(v3){$s_{3}$};
			\draw(3,0) node(v4){$s_{4}$};
			\draw[thick,->](v1) -- (.55,0);
			\draw[thick](.55,0) -- (v2);
			\draw[thick,->](v3) -- (1.45,0);
			\draw[thick](1.45,0) -- (v2);
			\draw[thick,->](v4) -- (2.45,0);
			\draw[thick](2.45,0) -- (v3);
		\end{tikzpicture}
		& \\
	\end{tabular}\end{center}
	
	The first four orientations correspond to case (i) in Proposition~\ref{prop:forbidden_subgraphs}, the last two orientations in the second row correspond to case (ii) in Proposition~\ref{prop:forbidden_subgraphs}, and the two orientations in the third row correspond to case (v) in Proposition~\ref{prop:forbidden_subgraphs}. Hence $\bruh$ cannot be distributive for Coxeter elements inducing these orientations.
	
	Now let $W=H_{4}$. The eight orientations of $\Gamma(H_{4})$ are shown below.
	\begin{center}\begin{tabular}{ccc}
		\begin{tikzpicture}\small
			\draw(0,0) node(v1){$s_{1}$};
			\draw(1,0) node[gray](v2){$s_{2}$};
			\draw(2,0) node[gray](v3){$s_{3}$};
			\draw(2.5,.2) node{\tiny 5};
			\draw(3,0) node[gray](v4){$s_{4}$};
			\draw[thick,->](v1) -- (.55,0);
			\draw[thick](.55,0) -- (v2);
			\draw[thick,->,gray](v3) -- (1.45,0);
			\draw[thick,gray](1.45,0) -- (v2);
			\draw[thick,->,gray](v3) -- (2.55,0);
			\draw[thick,gray](2.55,0) -- (v4);
		\end{tikzpicture}
		&
		\begin{tikzpicture}\small
			\draw(0,0) node[gray](v1){$s_{1}$};
			\draw(1,0) node[gray](v2){$s_{2}$};
			\draw(2,0) node[gray](v3){$s_{3}$};
			\draw(2.5,.2) node{\tiny 5};
			\draw(3,0) node(v4){$s_{4}$};
			\draw[thick,->,gray](v2) -- (.45,0);
			\draw[thick,gray](.45,0) -- (v1);
			\draw[thick,->,gray](v2) -- (1.55,0);
			\draw[thick,gray](1.55,0) -- (v3);
			\draw[thick,->](v3) -- (2.55,0);
			\draw[thick](2.55,0) -- (v4);
		\end{tikzpicture}
		&
		\begin{tikzpicture}\small
			\draw(0,0) node[gray](v1){$s_{1}$};
			\draw(1,0) node[gray](v2){$s_{2}$};
			\draw(2,0) node[gray](v3){$s_{3}$};
			\draw(2.5,.2) node{\tiny 5};
			\draw(3,0) node(v4){$s_{4}$};
			\draw[thick,->,gray](v2) -- (.45,0);
			\draw[thick,gray](.45,0) -- (v1);
			\draw[thick,->,gray](v2) -- (1.55,0);
			\draw[thick,gray](1.55,0) -- (v3);
			\draw[thick,->](v4) -- (2.45,0);
			\draw[thick](2.45,0) -- (v3);
		\end{tikzpicture}
		\\
		\begin{tikzpicture}\small
			\draw(0,0) node(v1){$s_{1}$};
			\draw(1,0) node[gray](v2){$s_{2}$};
			\draw(2,0) node[gray](v3){$s_{3}$};
			\draw(2.5,.2) node{\tiny 5};
			\draw(3,0) node[gray](v4){$s_{4}$};
			\draw[thick,->](v2) -- (.45,0);
			\draw[thick](.45,0) -- (v1);
			\draw[thick,->,gray](v3) -- (1.45,0);
			\draw[thick,gray](1.45,0) -- (v2);
			\draw[thick,->,gray](v3) -- (2.55,0);
			\draw[thick,gray](2.55,0) -- (v4);
		\end{tikzpicture}
		&
		\begin{tikzpicture}\small
			\draw(0,0) node(v1){$s_{1}$};
			\draw(1,0) node[gray](v2){$s_{2}$};
			\draw(2,0) node[gray](v3){$s_{3}$};
			\draw(2.5,.2) node{\tiny 5};
			\draw(3,0) node[gray](v4){$s_{4}$};
			\draw[thick,->](v2) -- (.45,0);
			\draw[thick](.45,0) -- (v1);
			\draw[thick,->,gray](v3) -- (1.45,0);
			\draw[thick,gray](1.45,0) -- (v2);
			\draw[thick,->,gray](v4) -- (2.45,0);
			\draw[thick,gray](2.45,0) -- (v3);
		\end{tikzpicture}
		&
		\begin{tikzpicture}\small
			\draw(0,0) node(v1){$s_{1}$};
			\draw(1,0) node[gray](v2){$s_{2}$};
			\draw(2,0) node[gray](v3){$s_{3}$};
			\draw(2.5,.2) node{\tiny 5};
			\draw(3,0) node[gray](v4){$s_{4}$};
			\draw[thick,->](v1) -- (.55,0);
			\draw[thick](.55,0) -- (v2);
			\draw[thick,->,gray](v3) -- (1.45,0);
			\draw[thick,gray](1.45,0) -- (v2);
			\draw[thick,->,gray](v4) -- (2.45,0);
			\draw[thick,gray](2.45,0) -- (v3);
		\end{tikzpicture}
		\\
		\begin{tikzpicture}\small
			\draw(0,0) node(v1){$s_{1}$};
			\draw(1,0) node(v2){$s_{2}$};
			\draw(2,0) node(v3){$s_{3}$};
			\draw(2.5,.2) node{\tiny 5};
			\draw(3,0) node(v4){$s_{4}$};
			\draw[thick,->](v1) -- (.55,0);
			\draw[thick](.55,0) -- (v2);
			\draw[thick,->](v2) -- (1.55,0);
			\draw[thick](1.55,0) -- (v3);
			\draw[thick,->](v3) -- (2.55,0);
			\draw[thick](2.55,0) -- (v4);
		\end{tikzpicture}
		&
		\begin{tikzpicture}\small
			\draw(0,0) node(v1){$s_{1}$};
			\draw(1,0) node(v2){$s_{2}$};
			\draw(2,0) node(v3){$s_{3}$};
			\draw(2.5,.2) node{\tiny 5};
			\draw(3,0) node(v4){$s_{4}$};
			\draw[thick,->](v1) -- (.55,0);
			\draw[thick](.55,0) -- (v2);
			\draw[thick,->](v2) -- (1.55,0);
			\draw[thick](1.55,0) -- (v3);
			\draw[thick,->](v4) -- (2.45,0);
			\draw[thick](2.45,0) -- (v3);
		\end{tikzpicture}
		& \\
	\end{tabular}\end{center}
	
	The first four orientations correspond to case (i) in Proposition~\ref{prop:forbidden_subgraphs}, and the last two orientations in the second row correspond to case (ii) in Proposition~\ref{prop:forbidden_subgraphs}. The first orientation in the third row corresponds to case (vi) in Proposition~\ref{prop:forbidden_subgraphs}, and the second orientation in the third row corresponds to case (vii) in Proposition~\ref{prop:forbidden_subgraphs}.
\end{proof}

\begin{proposition}\label{prop:distributive_coincidental}
	If $W=A_{n}$ for $n\geq 1$, $W=B_{n}$ for $n\geq 2$, $W=H_{3}$ or $W=I_{2}(k)$ for $k\geq 5$, then there exists a Coxeter element $\gamma\in W$ such that $\bruh$ is distributive.
\end{proposition}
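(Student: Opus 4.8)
The plan is to exhibit one explicit Coxeter element for each of the four families and then to reduce distributivity to a single closure property of the filled-position sets. For $A_{n}$, $B_{n}$ and $H_{3}$ I take the linear Coxeter element $\gamma=s_{1}s_{2}\cdots s_{n}$, using the labelling of Figure~\ref{fig:coxeter_diagrams} so that the unique edge of label $4$ (resp.\ $5$) sits at the end of the path, and for $I_{2}(k)$ I take $\gamma=s_{1}s_{2}$. As a sanity check, the induced orientation $\Gamma_{\gamma}(W)$ avoids every configuration of Proposition~\ref{prop:forbidden_subgraphs}: in a linearly oriented path each interior generator has in-degree and out-degree one, so there is neither a double source (excluding (i)) nor a double sink (excluding (v)), and the maximal degree two excludes (iii) and (iv); moreover the only edge of label $\geq4$ is terminal and can never be the upstream labelled edge required by (ii), while (vi) and (vii) need an edge of label $\geq5$ at the far end of a directed path of length three and hence cannot occur in these diagrams at all. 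It therefore remains to prove that these elements genuinely yield distributive lattices.

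The central point is that in $\bruh$ the join is always a union of filled positions. By the proof of Theorem~\ref{thm:bruhat_lattice} we have $\alpha_{\gamma}(u\vee_{B}u')=\alpha_{\gamma}(u)\cup\alpha_{\gamma}(u')$, and $u\leq_{B}u'$ holds exactly when $\alpha_{\gamma}(u)\subseteq\alpha_{\gamma}(u')$. Consequently, to prove distributivity it suffices to show that the family $\mathcal{A}_{\gamma}=\bigl\{\alpha_{\gamma}(w)\mid w\in C_{\gamma}\bigr\}$ is closed under intersection. Indeed, if $\alpha_{\gamma}(u)\cap\alpha_{\gamma}(u')=\alpha_{\gamma}(v)$ for some $\gamma$-sortable $v$, then $v\leq_{B}u,u'$, and any common lower bound $x$ satisfies $\alpha_{\gamma}(x)\subseteq\alpha_{\gamma}(u)\cap\alpha_{\gamma}(u')=\alpha_{\gamma}(v)$, so $x\leq_{B}v$; thus $v=u\wedge_{B}u'$ and the map $w\mapsto\alpha_{\gamma}(w)$ sends $\vee_{B}$ to $\cup$ and $\wedge_{B}$ to $\cap$ simultaneously. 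This embeds $\bruh$ as a sublattice of the Boolean lattice of finite subsets of $\mathbb{N}$, and since any sublattice of a distributive lattice is distributive, the proposition follows. In other words, the whole statement is equivalent to the assertion that $\mathcal{A}_{\gamma}$ is intersection-closed, equivalently that $\bruh$ contains no pentagon $N_{5}$; the computations in Proposition~\ref{prop:forbidden_subgraphs} are precisely exhibitions of such pentagons.

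To establish intersection-closure I would identify $\mathcal{A}_{\gamma}$, ordered by inclusion, with the lattice of order ideals of a finite poset, since order ideals are automatically closed under both union and intersection and form a distributive lattice by Birkhoff's theorem. For $A_{n}$ this is exactly Armstrong's observation that $\bruh$ is the lattice of order ideals of the root poset \cite{armstrong09sorting}. For $B_{n}$ and $H_{3}$ I would build the analogous bijection: from the nested blocks $b_{1}\supseteq b_{2}\supseteq\cdots\supseteq b_{l}$ of a $\gamma$-sortable element one reads off an antichain of positive roots, and one then checks that inclusion of filled positions translates into containment of the associated order ideals. For $I_{2}(k)$ the lattice can be written down by hand: it consists of the Boolean lattice on $\{s_{1},s_{2}\}$ together with a chain attached above its top element, which is visibly distributive.

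The hard part will be the $B_{n}$ and $H_{3}$ cases, where the braid relations of order $4$ and $5$ make the $\gamma$-sorting words, and hence the sets $\alpha_{\gamma}(w)$, far less transparent than in the simply-laced path $A_{n}$: one must track how a filled position in one block forces or forbids filled positions in later blocks across an edge of label $\geq4$. Verifying that the resulting family of position sets is exactly a family of order ideals (equivalently, closed under intersection) is where the coincidental nature of these groups enters essentially, since it is precisely for the types with arithmetically spaced degrees that the root poset has exactly Catalan-many order ideals, so that the bijection can close up. For the remaining finite types this count already fails, in agreement with Corollary~\ref{cor:type_d_e}.
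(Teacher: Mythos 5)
Your reduction of distributivity to intersection-closure of the family $\mathcal{A}_{\gamma}=\{\alpha_{\gamma}(w)\mid w\in C_{\gamma}\}$ is sound: the paper's proof of Theorem~\ref{thm:bruhat_lattice} does give that $u\leq_{B}u'$ is containment of filled positions and that joins are unions, so if $\mathcal{A}_{\gamma}$ were also closed under intersection, $\bruh$ would embed as a sublattice of a Boolean lattice and distributivity would follow. Your $I_{2}(k)$ case is complete (the lattice is a diamond with a chain on top), and the $A_{n}$ case is acceptable since you can lean on Armstrong's observation, which the paper also invokes (via the Bandlow--Killpatrick bijection to Dyck paths under dominance order).

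The genuine gap is that the $B_{n}$ and $H_{3}$ cases --- which you yourself identify as ``the hard part'' --- are never proved. ``I would build the analogous bijection \dots one then checks that inclusion of filled positions translates into containment of the associated order ideals'' is a plan, not an argument, and it is deferred precisely where all the content of the proposition lies. Moreover, for $H_{3}$ your plan rests on a false premise: the paper explicitly remarks that for the linear Coxeter element of $H_{3}$ the lattice $\bruh$ is distributive yet \emph{not} isomorphic to the lattice of order ideals of Armstrong's root poset of $H_{3}$, so reading off antichains of positive roots cannot close up there; one would have to manufacture some other finite poset, and producing it is exactly the missing work. The paper sidesteps all of this by citing an existing bijection for $B_{n}$ (Stump's bijection to type-$B$ Dyck paths under dominance order, a lattice known to be distributive) and by a finite computer verification for $H_{3}$. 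Finally, your closing heuristic --- that only the coincidental types have Catalan-many order ideals in the root poset --- is incorrect: the number of order ideals (equivalently, antichains) of the root poset equals the $W$-Catalan number for \emph{every} Weyl group, including $D_{n}$, $E_{6},E_{7},E_{8}$ and $F_{4}$; what fails in those types is the isomorphism between $\bruh$ and that ideal lattice, not the count, so this cannot be where coincidentality enters.
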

\begin{proof}
	Let $W=A_{n}$ and let $\gamma$ be the Coxeter element that induces the linear orientation 
	\raisebox{-.16cm}{\begin{tikzpicture}\small
			\draw(0,0) node(v1){$s_{1}$};
			\draw(1,0) node(v2){$s_{2}$};
			\draw(2,0) node(v3){$\cdots$};
			\draw(3,0) node(v4){$s_{n}$};
			\draw[thick,->](v1) -- (.55,0);
			\draw[thick](.55,0) -- (v2);
			\draw[thick,->](v2) -- (1.5,0);
			\draw[thick](1.5,0) -- (v3);
			\draw[thick,->](v3) -- (2.6,0);
			\draw[thick](2.6,0) -- (v4);
	\end{tikzpicture}}.
	It follows from the bijection in \cite{bandlow01area} that $\bruh$ is isomorphic to the lattice of classical Dyck paths under dominance order. This lattice is known to be distributive, see for instance \cite{ferrari05lattices}*{Corollary~2.2}. The same bijection implies also that $\bruh$ is isomorphic to the lattice of order ideals of the root poset of $A_{n}$.
	
	Let $W=B_{n}$ and let $\gamma$ be the Coxeter element that induces the linear orientation 
	\raisebox{-.16cm}{\begin{tikzpicture}\small
			\draw(0,0) node(v1){$s_{1}$};
			\draw(1,0) node(v2){$s_{2}$};
			\draw(2,0) node(v3){$\cdots$};
			\draw(3.2,0) node(v4){$s_{n-1}$};
			\draw(3.8,.2) node{\tiny 4};
			\draw(4.2,0) node(v5){$s_{n}$};
			\draw[thick,->](v1) -- (.55,0);
			\draw[thick](.55,0) -- (v2);
			\draw[thick,->](v2) -- (1.5,0);
			\draw[thick](1.5,0) -- (v3);
			\draw[thick,->](v3) -- (2.6,0);
			\draw[thick](2.6,0) -- (v4);
			\draw[thick,->](v4) -- (3.85,0);
			\draw[thick](3.85,0) -- (v5);
	\end{tikzpicture}}.
	It follows from the bijection in \cite{stump13more}*{Section~3} that $\bruh$ is isomorphic to the lattice of type-$B$ Dyck paths under dominance order. This lattice is known to be distributive, see \cite{muehle13heyting}*{Theorem~2.9}. The same bijection implies also that $\bruh$ is isomorphic to the lattice of order ideals of the root poset of $B_{n}$.
	
	Let $W=I_{2}(k)$ for $k\geq 5$, and denote by $s_{1}$ and $s_{2}$ the Coxeter generators of $W$. We have $\mathcal{B}_{s_{1}s_{2}}\cong\mathcal{B}_{s_{2}s_{1}}$, and this lattice is trivially distributive. It is also isomorphic to the lattice of order ideals of the ``root poset'' of $I_{2}(k)$ defined by Armstrong in \cite{armstrong09generalized}*{Figure~5.15}.
	
	Let $W=H_{3}$ and let $\gamma$ be the Coxeter element that induces the linear orientation
	\raisebox{-.16cm}{\begin{tikzpicture}\small
			\draw(0,0) node(v1){$s_{1}$};
			\draw(1,0) node(v2){$s_{2}$};
			\draw(1.5,.2) node{\tiny 5};
			\draw(2,0) node(v4){$s_{3}$};
			\draw[thick,->](v1) -- (.55,0);
			\draw[thick](.55,0) -- (v2);
			\draw[thick,->](v2) -- (1.55,0);
			\draw[thick](1.55,0) -- (v3);
	\end{tikzpicture}}.
	We can easily check by computer that the resulting lattice $\bruh$ is distributive. However, in this case, $\bruh$ is \emph{not} isomorphic to the lattice of order ideals of the ``root poset'' of $H_{3}$ defined by Armstrong in \cite{armstrong09generalized}*{Figure~5.15}.
\end{proof}


\begin{proof}[Proof of Theorem~\ref{thm:main_distributive}]
	This follows immediately from Proposition~\ref{prop:distributive_coincidental}.
\end{proof}

We conclude this section with the following conjecture.

\begin{conjecture}\label{conj:list_exhaustive}
	For finite Coxeter groups, the list in Proposition~\ref{prop:forbidden_subgraphs} is exhaustive, \ie if $W$ is a finite Coxeter group, $\gamma\in W$ is a Coxeter element and the orientation $\Gamma_{\gamma}(W)$ of the Coxeter diagram of $W$ induced by $\gamma$ does not contain one of the induced subgraphs listed in Proposition~\ref{prop:forbidden_subgraphs}, then $\bruh$ is distributive.
\end{conjecture}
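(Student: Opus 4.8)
The plan is to establish the converse of Proposition~\ref{prop:forbidden_subgraphs}: for a finite Coxeter group $W$ and a Coxeter element $\gamma$ whose induced orientation $\Gamma_{\gamma}(W)$ avoids all seven forbidden subgraphs, $\bruh$ is distributive. I would first reformulate distributivity in a form adapted to $\bruh$. Since $\bruh$ is join-distributive (Theorem~\ref{thm:bruhat_join_semidistributive}), it is meet-semidistributive, and a short check shows a meet-semidistributive lattice can contain no sublattice isomorphic to $M_{3}$. By Birkhoff's criterion a finite lattice is distributive precisely when it avoids both $M_{3}$ and $N_{5}$, so the conjecture is equivalent to the statement that every pentagon $N_{5}$ inside $\bruh$ forces one of the forbidden orientations in $\Gamma_{\gamma}(W)$. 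Two reductions then apply. First, I would reduce to irreducible $W$: for $W=W_{1}\times W_{2}$ the poset $\bruh$ factors as a direct product of the two sortable-element lattices, each forbidden subgraph is connected and hence lives in a single component, and a product of distributive lattices is distributive. Second, I would invoke the case analysis underlying Corollary~\ref{cor:type_d_e}, which in fact shows that for types $D_{n}, E_{n}, F_{4}$ and $H_{4}$ \emph{every} orientation of the diagram already contains a forbidden subgraph; for these types the implication is therefore vacuous. This leaves exactly the coincidental types.

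Among the coincidental types the statement has content only for the infinite families. For $I_{2}(k)$ there are only two nodes, no forbidden subgraph can arise, and $\bruh$ is trivially distributive; for $H_{3}$ there are finitely many orientations and one verifies by computer, as in Proposition~\ref{prop:distributive_coincidental}, that each admissible orientation yields a distributive lattice. The heart of the argument is thus types $A_{n}$ and $B_{n}$. Here I would first translate ``avoids all forbidden subgraphs'' into an explicit description of the admissible orientations. For $A_{n}$ only case~(i) can occur, so admissibility is exactly the absence of an internal source; an elementary argument on the sequence of edge directions shows these are precisely the single-sink orientations $s_{1}\to\cdots\to s_{m}\leftarrow\cdots\leftarrow s_{n}$. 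A slightly longer analysis using cases~(i), (ii) and~(v) pins down the admissible orientations of $B_{n}$. I would then prove distributivity by exhibiting, for each admissible orientation, an isomorphism between $\bruh$ and the lattice of order ideals of an explicitly described poset $Q_{\gamma}$ (a re-rooted analogue of the root poset), generalizing the identifications in Proposition~\ref{prop:distributive_coincidental}; distributivity then follows from Birkhoff's theorem. Equivalently, and this is the concrete combinatorial target, one can argue through the antimatroid of Theorem~\ref{thm:join_distributive_antimatroid}: for admissible $\gamma$ one shows that the filled-position sets $\alpha_{\gamma}(w)$ are closed under intersection (they are always closed under union, which computes $\vee_{B}$), so that $\wedge_{B}$ is given by intersecting filled positions and the antimatroid is a poset antimatroid.

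The main obstacle is the uniform treatment of the families $A_{n}$ and $B_{n}$. Proving intersection-closure of the filled-position sets — equivalently, constructing $Q_{\gamma}$ together with the order-ideal isomorphism — requires controlling how the $\gamma$-sorting word of $u\wedge_{B}u'$ is built from those of $u$ and $u'$, and the combinatorics of sorting words depends sensitively on the chosen orientation. It is exactly this dependence that resists a single case-free argument and explains why the statement is posed as a conjecture. A promising route is to run the sortable-element recursion of Reading and Speyer with respect to the sink of the orientation, build the bijection to order ideals inductively, and show that admissibility of the orientation is precisely the hypothesis under which the inductive step preserves intersection-closure. Establishing that equivalence in general, rather than verifying its two directions only on small diagrams, is the crux of a complete proof.
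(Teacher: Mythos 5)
This statement is posed in the paper as a \emph{conjecture}: the paper offers no proof of it at all, only the remark that it can be checked by computer for $W=H_{3}$ and a partial description of the expected admissible orientations for $B_{n}$ and $A_{n}$. So there is no paper proof to compare against, and the only question is whether your proposal closes the conjecture. It does not. Your reductions are sound as far as they go: $M_{3}$ is excluded by meet-semidistributivity (Theorem~\ref{thm:bruhat_join_semidistributive}), so by Birkhoff's criterion only $N_{5}$ must be ruled out; the case analysis in the proof of Corollary~\ref{cor:type_d_e} does show that every orientation of the diagrams of types $D_{n}$, $E_{n}$, $F_{4}$, $H_{4}$ contains a forbidden subgraph, making the conjecture vacuous there; and your identification of the admissible orientations of $A_{n}$ as the single-sink orientations is correct, since only pattern~(i) of Proposition~\ref{prop:forbidden_subgraphs} can occur in a simply-laced path.

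The genuine gap is the step you yourself flag as ``the crux'': proving that $\bruh$ \emph{is} distributive for every admissible orientation of $A_{n}$, $B_{n}$ (and the non-linear admissible orientations of $H_{3}$). Proposition~\ref{prop:distributive_coincidental} handles exactly one orientation per type --- the linear one --- and the bijections it invokes (to Dyck paths, type-$B$ Dyck paths, order ideals of root posets) are stated only for that orientation; nothing in the paper or in your argument extends them to the other single-sink orientations, of which there are $n$ in type $A_{n}$ alone. Your proposed mechanism, closure of the filled-position sets $\alpha_{\gamma}(w)$ under intersection (union-closure being known from the proof of Theorem~\ref{thm:bruhat_lattice}), would indeed suffice, since a family of sets closed under union and intersection is a distributive lattice; but establishing intersection-closure precisely for admissible $\gamma$ is not carried out --- it is announced as ``a promising route'' via the Reading--Speyer recursion. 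That equivalence \emph{is} the content of the conjecture, so the proposal remains a strategy, not a proof; the statement stays open after your argument exactly as it does in the paper.
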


\begin{remark}
	The claim of Conjecture~\ref{conj:list_exhaustive} for $W=H_{3}$ can be verified by computer. For $W=B_{3}$, the only orientation other than the one in Proposition~\ref{prop:distributive_coincidental} that is conjectured to yield a distributive lattice $\bruh$ is
	\raisebox{-.16cm}{\begin{tikzpicture}\small
		\draw(0,0) node(v1){$s_{1}$};
		\draw(1,0) node(v2){$s_{2}$};
		\draw(2,0) node(v3){$\cdots$};
		\draw(3.2,0) node(v4){$s_{n-1}$};
		\draw(3.8,.2) node{\tiny 4};
		\draw(4.2,0) node(v5){$s_{n}$};
		\draw[thick,->](v1) -- (.55,0);
		\draw[thick](.55,0) -- (v2);
		\draw[thick,->](v2) -- (1.5,0);
		\draw[thick](1.5,0) -- (v3);
		\draw[thick,->](v3) -- (2.6,0);
		\draw[thick](2.6,0) -- (v4);
		\draw[thick,->](v5) -- (3.75,0);
		\draw[thick](3.75,0) -- (v4);
	\end{tikzpicture}}. For $W=A_{n}$ there are several more options.
\end{remark}

\bibliography{../../literature}

\end{document}